\DeclareSymbolFont{usualmathcal}{OMS}{cmsy}{m}{n}
\DeclareSymbolFontAlphabet{\mathcal}{usualmathcal}
\newtheorem{theor}{Theorem}
\theoremstyle{definition}
\newtheorem{proposition}[theor]{Proposition}
\newtheorem{lemma}[theor]{Lemma}
\newtheorem{define}{Definition}
\newtheorem{problem}{Problem}
\newtheorem{open}[problem]{Open problem}
\newtheorem{example}{Example}
\theoremstyle{remark}
\newtheorem{rem}{Remark}
\def\oldvec{\mathaccent "017E\relax }
\DeclareMathOperator{\Or}{\mathsf{O\oldvec{r}}}
\newcommand{\pinner}{\mathbin{\mathchoice
{\hbox{\vrule width0.6em depth0pt height0.4pt
	\vrule width0.4pt depth0pt height0.8ex}}
{\hbox{\vrule width0.6em depth0pt height0.4pt
	\vrule width0.4pt depth0pt height0.8ex}}
{\hbox{\kern0.14em
	\vrule width0.48em depth0pt height0.4pt
	\vrule width0.4pt depth0pt height0.6ex\kern0.14em}}
{\hbox{\kern0.1em
	\vrule width0.39em depth0pt height0.4pt
	\vrule width0.4pt depth0pt height0.5ex\kern0.1em}}}}
\newcommand{\BBR}{\mathbb{R}}
\newcommand{\ba}{{\boldsymbol{a}}}
\newcommand{\bx}{{\boldsymbol{x}}}
\newcommand{\veps}{\varepsilon}
\newcommand{\dd}{\partial}
\newcommand{\Id}{{\mathrm d}}
\DeclareMathOperator{\const}{const}
\DeclareMathOperator{\Jac}{Jac}
\newcommand{\lshad}{[\![}
\newcommand{\rshad}{]\!]}
\newcommand{\schouten}[1]{\lshad {#1} \rshad}
\newcommand{\by}[1]{\textrm{{#1}}}
\newcommand{\jour}[1]{\textrm{{#1}}}
\newcommand{\vol}[1]{\textbf{{#1}}}
\newcommand{\book}[1]{\textrm{{#1}}}
\DeclareRobustCommand{\square}{\mathbin{\mathpalette\morphic@square\relax}}
\newcommand{\morphic@square}[2]{%
  \sbox\z@{$\m@th#1\rule{4pt}{4pt}$}%
  \vcenter{\box\z@}%
}
\begin{document}

\begin{center}{\Large \textbf{
The tower of Kontsevich deformations 
for Nambu\/--\/Poisson structures on~$\mathbb{R}^{d}$: 
dimension\/-\/specific micro\/-\/graph calculus\\
}}\end{center}

\begin{center}
R.~Buring\textsuperscript{1} and
A.\,V.\,Kiselev\textsuperscript{2$\star$}
\end{center}

\begin{center}
{\bf 1} Institut f\"ur Mathematik, 
Johannes Gutenberg\/--\/Uni\-ver\-si\-t\"at,
Staudingerweg~9, 
\mbox{D-\/55128} Mainz, Germany\\
\textbf{Address for correspondence}:
Team \textsc{mathexp}, 
Centre INRIA de Saclay \^Ile\/-\/de\/-\/France,\\ 
B\^at.~Alan Turing, 1~rue Honor\'e\ d'Estienne d'Orves, 
F-91120 Palaiseau, France\\
{\bf 2} Ber\-nou\-lli Institute for Mathematics, Computer Science and Artificial Intelligence, University of Groningen, P.O.~Box 407, 9700~AK Groningen, The Netherlands\\
\textbf{Present address}:
Institut des Hautes $\smash{\text{\'Etudes}}$ Scientifiques ($\smash{\text{IH\'ES}}$), 
35~route de Chartres, Bures\/-\/sur\/-\/Yvette, F-91440 France
\\
* A.V.Kiselev@rug.nl
\end{center}

\begin{center}
15 December 2022, revised 12 July 2023, in final form 13 August 2023
\end{center}


\definecolor{palegray}{gray}{0.95}
\begin{center}
\colorbox{palegray}{
  \begin{tabular}{rr}
  \begin{minipage}{0.1\textwidth}
    \includegraphics[width=20mm]{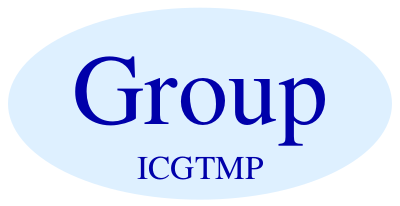}
  \end{minipage}
  &
  \begin{minipage}{0.85\textwidth}
    \begin{center}
    {\it 34th International Colloquium on Group Theoretical Methods in Physics}\\
    {\it Strasbourg, 18-22 July 2022} \\
    \doi{10.21468/SciPostPhysProc.?}\\
    \end{center}
  \end{minipage}
\end{tabular}
}
\end{center}

\section*{Abstract}
{\bf
In Kontsevich's graph calculus, internal vertices of directed graphs are inhabited by
mul\-ti\/-\/vectors, e.g., Poisson bi\/-\/vectors;
the Nambu\/-\/determinant Poisson brackets 
are dif\-fe\-ren\-ti\-al\/-\/polynomial 
in the Casimir(s) and 
density~$\varrho$ times 
Levi\/-\/Civita symbol.
We 
resolve the old vertices into 
subgraphs such that every new internal vertex contains one Casimir or one Levi\/-\/Civita symbol${}\times\varrho$.
Using this micro\/-\/graph calculus, we show 
that Kontsevich's tetrahedral $\gamma_3$-flow on the space of Nambu\/-\/determinant Poisson brackets over~$\mathbb{R}^3$ is a Poisson coboundary: we realize the trivializing vector field~$\smash{\vec{X}}$ over~$\smash{\mathbb{R}^3}$ using micro\/-\/graphs.
This~$\smash{\vec{X}}$ 
projects to the known trivializing vector field for the $\gamma_3$-flow over~$\smash{\mathbb{R}^2}$.%
}


\subsection*{Introduction}
\noindent%
Kontsevich introduced~\cite{Ascona96} a universal --\,for any affine Poisson manifold of dimension~$d$\,-- construction of infinitesimal symmetries for the Jacobi identity: for suitable cocycles~$\gamma$ in the graph complex, one obtains bi\/-\/vector flows $\dot{P} = Q_{\gamma}([P])$ with differential\/-\/polynomial right\/-\/hand sides (with respect to components~$P^{ij}(\bx)$ 
of Poisson structures~$P \in \smash{\Gamma(\bigwedge^2 TM^d_{\text{aff}})}$).
We detect in~\cite{skew21} that for the tetrahedral graph cocycle~$\gamma_3$ from~\cite{Ascona96} and for the pentagon\/-\/wheel graph cocycle~$\gamma_5$ (see~\cite{JNMP2017}), the corresponding flows (see~\cite{f16,
OrMorphism2018}) have a well\/-\/defined restriction to the subclass of Nambu\/-\/determinant Poisson brackets\footnote{\label{FootFormulaNambu}%
The Nambu\/-\/determinant Poisson brackets (with $\varrho \not\equiv 1$ and Casimir(s)~$a_\ell$) of $f,g \in C^1(\mathbb{R}^d)$ are, e.g., 
\[ 
\{f,g\}_{P(\varrho,[a])} = \varrho(x,y,z) \cdot \left|\begin{smallmatrix}a_x & f_x & g_x \\ a_y & f_y & g_y \\ a_z & f_z & g_z\end{smallmatrix}\right|\quad\text{ on }\mathbb{R}^3\ni\bx = (x,y,z);
\]
likewise $\{f,g\}_{P(\varrho,[a_1],[a_2])} = \varrho(x^1,x^2,x^3,x^4) \cdot \det\bigl(\partial(a_1,a_2,f,g)/\partial(x^1,x^2,x^3,x^4)\bigr)$ on~$\mathbb{R}^4$, and so on; all such formulas are 
tensorial w.r.t. coordinate transformations
(as 
$\varrho(\bx) \cdot \partial_{x^1} \wedge \ldots \wedge \partial_{x^d}$ 
is a top\/-\/degree multivector on~$\mathbb{R}^d$).}
$P(\varrho, [\ba])$ on~$\mathbb{R}^d$ at least in the following three cases:
(\textit{i}) $\gamma_3$-\/cocycle flow $\dot{P} = Q_{\gamma}([P])$ for $P(\varrho,[a])$ over~$\mathbb{R}^3$,
(\textit{ii}) the same $\gamma_3$-\/cocycle and the flow of $P(\varrho,[a_1],[a_2])$ over~$\mathbb{R}^4$, and
(\textit{iii}) the next, $\gamma_5$-\/cocycle flow for $P(\varrho,[a])$ over~$\mathbb{R}^3$.

To study the (non)\/triviality of Kontsevich's graph flows in the second Poisson cohomology group for Nambu\/--\/determinant brackets $\{\cdot,\cdot\}_{P(\varrho,[\ba])}$, we consider the coboundary equation,
\begin{equation}\label{EqCoboundaryInPoly}
Q_{\gamma}([P])([\varrho],[\ba]) = \schouten{P(\varrho,[\ba]), \smash{\vec{X}}([\varrho],[\ba])},
\end{equation}
upon vector fields $\smash{\vec{X}}([\varrho],[\ba])$ with differential\/-\/polynomial coefficients over~$\BBR^d$. 
We discovered in~\cite{skew21} that the $\gamma_3$-\/flow over~$\mathbb{R}^3$ is trivial w.r.t.\ a unique solution~$\smash{\vec{X}}$ 
mod~$\lshad P, H([\varrho],[\ba]) \rshad$.
In this text we explain how, for $\gamma=\gamma_3$ and $d=2$, a solution of~\eqref{EqCoboundaryInPoly} is constructed using Kontsevich's graphs, 
and then, for $d=3$, how the trivializing vector field~$X^{\gamma_3}$ is found by using \emph{micro\/-\/graphs} that resolve $\varrho(\bx) \cdot \text{Levi\/-\/Civita symbol}$ against the Casimir(s)~$a_\ell$ within copies of Nambu\/-\/de\-ter\-mi\-nant Poisson brackets $\{\cdot,\cdot\}_{P(\varrho,[\ba])} = \varrho(\bx) \cdot \sum_{i_1,\ldots,i_d=1}^d \varepsilon^{\vec{\imath}} \cdot \partial_{i_1}(a_1)\cdot$
$\ldots\cdot\partial_{i_{d-2}}(a_{d-2}) \cdot \partial_{i_{d-1}} \otimes \partial_{i_d}$ in the vertices of Kontsevich's directed graphs for the flow~$Q_{\gamma}([P]) 
$.


\section{Preliminaries: the tetrahedral flow $\smash{\dot{P}=Q_{\gamma_3}(P)}$
over twofolds}\label{ChMKgraphs2D}
\noindent%
Kontsevich's directed graphs are built of $n \geqslant 0$~wedges $\smash{\xleftarrow{L}\!\!\bullet\!\!\xrightarrow{R}}$, usually drawn in the upper half\/-\/plane~$\mathbb{H}^2$, over $m \geqslant 0$~ordered sinks along~$\mathbb{R} = \partial \mathbb{H}^2$; tadpoles are allowed.
Leibniz graphs are akin: the out\/-\/degrees of all but one (or more) vertices equal~$2$ yet there is (at least) one aerial vertex of out\/-\/degree~$3$ and its outgoing edges are ordered $\text{Left} \prec \text{Middle} \prec \text{Right}$.\footnote{\label{FootExJacLeibnizGraph}
For example, the tripod is a Leibniz graph; like every Leibniz graph, it expands to a linear combination of Kontsevich graphs, namely to the Jacobiator $\tfrac{1}{2}\schouten{P,P}$ for a bi\/-\/vector~$P$ whose copies are realized by wedges~(\cite{f16}).}

We shall study only those flows $\dot{P} = Q([P])$ on spaces of bi\/-\/vectors $P \in \Gamma\smash{(\bigwedge^2 TM^{d<\infty}_{\text{aff}})}$ which are encoded by Kontsevich's graphs.
From~\cite{Ascona96} (cf.~\cite{OrMorphism2018}) we know that from suitable \emph{cocycles}~$\gamma$ in the Kontsevich graph complex,
one obtains the flows\footnote{\label{FootFlowDependsOnMarkerGraphCocycle}
The formula of Kontsevich's graph flow $\dot{P} = Q_{\gamma}([P])$ can depend on a choice of representative~$\gamma$ for the graph cohomology class~$[\gamma]$. Fortunately, the vertex\/-\/edge bi\/-\/gradings $(4,6)$ for~$\gamma_3$ and $(6,10)$ for two graphs in~$\gamma_5$ are not yet big enough to provide room for any nonzero coboundaries (from nonzero graphs on $3$~vertices and $5$~edges or on $5$~vertices and $9$~edges, respectively). In other words, the known markers for~$[\gamma_3]$ and~$[\gamma_5]$ are in fact uniquely defined 
up to a nonzero multiplicative constant;
we prove this by listing all the admissible (non)zero ``potentials'' and by taking their vertex\/-\/expanding differentials in the graph complex.
This is why, in our present study of the $\gamma_3$-{} and $\gamma_5$-\/flows on the spaces of Nambu\/--\/Poisson brackets, we do not care about a would\/-\/be response of trivializing vector fields~$X^\gamma$ in~\eqref{EqCoboundaryInPoly} to shifts of the marker cocycle~$\gamma$ within its graph cohomology class~$[\gamma]$.}
$\dot{P} = Q_{\gamma}([P])$ which preserve the (sub)\-set of \emph{Poisson} bi\/-\/vectors on~$M_{\text{aff}}^d$.
The tetrahedron~$\gamma_3$ and pentagon-wheel cocycle $\gamma_5$, see \cite{OrMorphism2018
}, are examples of graph cocycles giving such flows.

\begin{example}[\cite{Ascona96} and~\cite{f16}]\label{ExG3}
The tetrahedral\/-\/graph flow $\dot{P} = Q_{\gamma}(P)$ on the space of Poisson bi\/-\/vectors~$P$ over any $d$-\/dimensional affine Poisson manifold $M^{d\geqslant2}_{\text{aff}}$ is encoded by the linear combination of three 
directed graphs,\footnote{\label{FootFormatEncodingMKgraph}%
The encoding of each Kontsevich directed graph is an ordered list of ordered pairs of target vertices for the edges issued from the ordered set of arrowtails (here,
$\bigl\{\mathsf{2}$,$\mathsf{3}$,$\mathsf{4}$,$\mathsf{5}\bigr\}$), that is of aerial vertices.%
}
\begin{equation}\label{EqG3FlowMKgraphs}
\smash{Q_{\gamma_3}} = 1\cdot \bigl(\mathsf{0},\mathsf{1};\mathsf{2},\mathsf{4};\mathsf{2},\mathsf{5};\mathsf{2},\mathsf{3}\bigr)
-3\cdot\bigl(\mathsf{0},\mathsf{3};\mathsf{1},\mathsf{4};\mathsf{2},\mathsf{5};\mathsf{2},\mathsf{3} + \mathsf{0},\mathsf{3};\mathsf{4},\mathsf{5};\mathsf{1},\mathsf{2};\mathsf{2},\mathsf{4}\bigr),
\end{equation}
with a copy of the Poisson bi\/-\/vector in each internal vertex $\mathsf{2}$,$\mathsf{3}$,$\mathsf{4}$,$\mathsf{5}$ from which two decorated arrows, Left${}\prec{}$Right matching the bi\/-\/vector indices, are issued to the designated arrowhead vertices.
Sink vertices $\mathsf{0}$ and~$\mathsf{1}$ contain the arguments of bi\/-\/vector $Q_{\gamma_3}(P)$; vertices $\mathsf{2}$,$\mathsf{3}$,$\mathsf{4}$,$\mathsf{5}$ of the tetrahedron~$\gamma_3$ itself are internal.
\end{example}

The graph construction of the $\gamma_3$-\/flow $\smash{\dot{P}} = Q_{\gamma_3}(P)$ works in any dimension of the affine Poisson manifold $M^d_{\text{aff}}$ at hand. Now if $d=2$, this infinitesimal deformation of bi\/-\/vectors on twofolds is known to be trivial in the second Poisson cohomology, thus amounting to an infinitesimal change of local coordinates (performed along the integral trajectories of the trivializing vector field~$\smash{\vec{X}}$ on~$M^2_{\text{aff}}$).

\begin{proposition}[{
\cite[App.\,F]{f16}}]\label{PropG3Triv2DnotLeibnizGraphs}
\textbf{1.}\quad In dimension $d=2$ (where every bi\/-\/vector $P=\varrho(x,y)\cdot(\dd_x\otimes$
$\dd_y - \dd_y\otimes\dd_x)$ is 
Poisson,\footnote{\label{Foot2DnontrivPoissonCohomology}%
For the same reason, the Poisson condition --\,trivial in $d=2$\,-- is preserved by \emph{any} Kontsevich bi\/-\/vector graph (not necessarily obtained from a cocycle, on $n$ vertices and $2n-2$ edges, w.r.t. the graph differential~\cite{JNMP2017,Ascona96}). Yet, Proposition~\ref{PropG3Triv2DnotLeibnizGraphs} condensed to Eq.~\eqref{EqG3Triv2D} is not a tautology since the second Lichnerowicz\/--\/Poisson cohomology does not vanish \textit{a priori} over $d=2$; 
see, e.g., \by{Ph.\,Mon\-nier} \textit{Poisson cohomology in dimension two}, \jour{Israel J.~Math.} \vol{129} (2002) 189--207 (Preprint \texttt{arXiv:math.DG/0005261}).} 
in absence of nonzero Jacobiator tri\/-\/vectors), Kontsevich's tetrahedral flow $\smash{\dot{P}} = Q_{\gamma_3}(P)$, encoded by three graphs in~\eqref{EqG3FlowMKgraphs}, is Poisson\/-\/trivial,
\begin{equation}\label{EqG3Triv2D}
Q_{\gamma_3}\bigl(P(\varrho)\bigr) - \lshad P(\varrho),\smash{\vec{X}}\rshad = 0
  \in \smash{\Gamma\bigl(\bigwedge\nolimits^2 TM^2_{\text{aff}}\bigr)},
\qquad  
\smash{\vec{X}} = X^{\gamma_3}(P),\quad
X^{\gamma_3} = 
\raisebox{0pt}[6mm][4mm]{\unitlength=0.4mm
\special{em:linewidth 0.4pt}
\linethickness{0.4pt}
\begin{picture}(17,24)(5,5)
\put(-5,-7){
\begin{picture}(17.00,24.00)
\put(10.00,10.00){\circle*{1}}
\put(17.00,17.0){\circle*{1}}
\put(3.00,17.0){\circle*{1}}
\put(10.00,10.00){\vector(0,-1){7.30}}
\put(17.00,17.00){\vector(-1,0){14.00}}
\put(3.00,17.00){\vector(1,-1){6.67}}
\bezier{30}(3,17)(6.67,13.67)(9.67,10.33)
%
%
\put(17,17){\vector(-1,-1){6.67}}
\bezier{30}(17,17)(13.67,13.67)(10.33,10.33)
\bezier{52}(17.00,17.00)(16.33,23.33)(10.00,24.00)
\bezier{52}(10.00,24.00)(3.67,23.33)(3.00,17.00)
\put(16.8,18.2){\vector(0,-1){1}}
\put(10,17){\oval(18,18)}
\put(10,10){\line(1,0){10}}
\bezier{52}(20,10)(27,10)(21,16)
\put(21,16){\vector(-1,1){0}}
\end{picture}
}\end{picture}}
\:,  
\end{equation}
with respect to the class of $1$-\/vector $\smash{\vec{X}} = \dd_j \bigl( \dd_k \dd_m(P^{ij})\cdot \dd_n (P^{k\ell}) \cdot \dd_\ell(P^{mn}) \bigr)\,\dd_i$ 
(modulo Hamiltonian vector fields $\lshad P(\varrho), H([\varrho])\rshad$), encoded by the ``sunflower'' linear combination\footnote{\label{FootTadpolesDisappear}
The `sunflower' $1$-\/vector graph in Eq.~\eqref{EqG3Triv2D} expands, by the Leibniz rule, to the linear combination of two Kontsevich graphs, one of them with a $1$-\/cycle (or \emph{tadpole}). This nonzero graph with tadpole in~$X^{\gamma_3}$ survives in the bracket $\lshad P, X^{\gamma_3} \rshad$, yet no tadpoles are present in the three graphs of the $\gamma_3$-\/flow~$Q_{\gamma_3}$. The disappearance of the tadpole from
$Q_{\gamma_3}\bigl(P(\varrho)\bigr) - \lshad P(\varrho),\smash{\vec{X}}\rshad$
is due to a mechanism which will be explored.} 
of Kontsevich graphs~$X^{\gamma_3} = \sum_{a=1}^3 \bigl( \mathsf{0},a; \mathsf{1},\mathsf{3}; \mathsf{1},\mathsf{2}\bigr) = 
\bigl( \mathsf{0},\mathsf{1}; \mathsf{1},\mathsf{3}; \mathsf{1},\mathsf{2}\bigr) +
\bigl( \mathsf{0},\mathsf{2}; \mathsf{1},\mathsf{3}; \mathsf{1},\mathsf{2}\bigr) +
\bigl( \mathsf{0},\mathsf{3}; \mathsf{1},\mathsf{3}; \mathsf{1},\mathsf{2}\bigr)$
$= \bigl( \mathsf{0},\mathsf{1}; \mathsf{1},\mathsf{3}; \mathsf{1},\mathsf{2}\bigr) +
2\cdot \bigl( \mathsf{0},\mathsf{2}; \mathsf{1},\mathsf{3}; \mathsf{1},\mathsf{2}\bigr)
$.\\[0.5pt]
\mbox{ } \textbf{2.}\quad In dimension $d=2$, Poisson\/-\/coboundary equation~\eqref{EqG3Triv2D} is valid as an equality of bi\/-\/vectors with differential\/-\/polynomial coefficients (w.r.t.\ $\varrho$~in bi\/-\/vector~$P$), but its left\/-\/hand side cannot be expressed as a linear combination of zero Kontsevich graphs and Leibniz graphs (that is, differential consequences of the Jacobi identity, see~\cite{f16}).
\end{proposition}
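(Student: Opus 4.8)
The plan is to establish the two parts by different means: Part~1 by verifying the identity~\eqref{EqG3Triv2D} directly as an equality of scalar differential polynomials in~$\varrho$, and Part~2 by a dimensional-descent contradiction that isolates what is special about $d=2$.

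For Part~1 I would first exploit the collapse available in $d=2$: writing $P^{ij}=\varrho\,\varepsilon^{ij}$ with the \emph{constant} Levi--Civita symbol $\varepsilon^{12}=-\varepsilon^{21}=1$, both $Q_{\gamma_3}(P)$ and $\schouten{P(\varrho),\vec{X}}$ are top-degree bi-vectors on $\mathbb{R}^2$, so each equals a scalar differential polynomial in~$\varrho$ times $\dd_x\wedge\dd_y$ and the whole bi-vector identity reduces to matching a single coefficient. I would then expand both sides from their graph encodings: the three graphs of~\eqref{EqG3FlowMKgraphs} give $Q_{\gamma_3}(P)$ as a sum of products of four factors $\varrho\,\varepsilon$ carrying the prescribed edge-derivatives; on the other side the sunflower supplies three such factors and the Schouten bracket $\schouten{P,\vec{X}}$ adds one further factor~$P$ together with a single connecting contraction, so that both sides are quartic in~$P$ as required. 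The decisive step is that in $d=2$ every pair of symbols collapses through $\varepsilon^{ij}\varepsilon^{k\ell}=\delta^i_k\delta^j_\ell-\delta^i_\ell\delta^j_k$, which removes all index summations and leaves scalar monomials in~$\varrho$ and its partial derivatives; I would collect both expansions by monomial type and confirm that their coefficients coincide.

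The main obstacle in Part~1 is purely the bookkeeping: the four copies of~$P$ generate a large number of contractions and derivative monomials, so I would carry out the collection and the final cancellation with the graph/symbolic algebra the authors use, rather than by hand. One cancellation deserves separate attention, namely the one flagged in footnote~\ref{FootTadpolesDisappear}: the bracket $\schouten{P(\varrho),\vec{X}}$ inherits a tadpole contribution from the $1$-cycle of the sunflower, whereas none of the three graphs of $Q_{\gamma_3}(P)$ contains a tadpole; the $d=2$ identity therefore forces this tadpole monomial to be matched by genuine tadpole-free terms, and checking precisely this match is the concrete content of the ``mechanism'' promised there.

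For Part~2 I would argue by contradiction, using that the left-hand side of~\eqref{EqG3Triv2D} is a \emph{universal} combination of Kontsevich bi-vector graphs, meaningful in every dimension. Suppose it were a linear combination of zero graphs and Leibniz graphs. Zero graphs vanish identically, and a Leibniz graph vanishes on every Poisson structure because its single trivalent aerial vertex realizes the Jacobiator $\schouten{P,P}$; hence such a combination would vanish for \emph{every} Poisson bi-vector in \emph{every} dimension. To refute this it is enough to evaluate $Q_{\gamma_3}(P)-\schouten{P,\vec{X}}$ on one Poisson structure in dimension $d\geqslant 3$ and find it nonzero, the natural candidate being a Nambu-determinant bracket $P(\varrho,[a])$ on $\mathbb{R}^3$ --- indeed, the entire micro-graph construction that follows exists precisely because the simple sunflower $\vec{X}$ does \emph{not} trivialize $Q_{\gamma_3}$ there. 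The reason the value is nonzero is transparent: the $d=2$ cancellation above rested on the contraction $\varepsilon^{ij}\varepsilon^{k\ell}=\delta^i_k\delta^j_\ell-\delta^i_\ell\delta^j_k$, which has no analogue for $d\geqslant 3$, so the sunflower tadpole no longer cancels and certifies a nonzero difference. This contradiction shows the left-hand side is not a combination of zero and Leibniz graphs, so the $d=2$ triviality is genuinely dimension-specific rather than a formal consequence of the Jacobi identity; the only point needing care is that, a single nonzero evaluation being logically sufficient, I would perform an explicit $d=3$ computation to rule out an accidental vanishing.
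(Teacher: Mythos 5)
Your Part~1 is essentially the paper's own proof: equality~\eqref{EqG3Triv2D} is verified there ``by straightforward calculation with differential-polynomial coefficient (multi)vectors,'' which is exactly your expansion of both sides into scalar differential polynomials in~$\varrho$ via $\varepsilon^{ij}\varepsilon^{k\ell}=\delta^i_k\delta^j_\ell-\delta^i_\ell\delta^j_k$. For Part~2, however, you take a genuinely different route. The paper stays entirely at the level of graph combinatorics: it enumerates the twelve admissible connected Leibniz graphs built from one trident and two wedges with at most one tadpole, isolates the three tadpole-bearing Kontsevich topologies A, B, C produced when $P$ acts on the sunflower in $\lshad P,X^{\gamma_3}\rshad$, and shows that no real linear combination of Leibniz-graph expansions reproduces exactly these topologies (B contains a vertex of in-degree four that no Leibniz expansion yields, C is likewise unreachable, and the one Leibniz graph reproducing A drags in five unmatched tadpole graphs). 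Your argument instead invokes universality: a combination of zero and Leibniz graphs vanishes on every Poisson structure in every dimension, so a single nonvanishing evaluation of $Q_{\gamma_3}(P)-\lshad P,\smash{\vec{X}}\rshad$ on a Poisson structure over~$\mathbb{R}^3$ settles the matter. This is logically sound and dispenses with the enumeration entirely (it even covers the two-tadpole Leibniz graphs the paper sets aside), but it hinges on the deferred $d=3$ computation: the heuristic that the $\varepsilon\varepsilon=\delta\delta-\delta\delta$ collapse is unavailable for $d\geqslant3$ does not by itself rule out an accidental vanishing, and the paper's $d=3$ results (uniqueness of the trivializing field only modulo Hamiltonian fields, Proposition~\ref{PropDim3Down2trivVFtetra}) do not quite hand you the required nonvanishing for free; you correctly flag that the explicit check must be carried out. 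What the paper's combinatorial route buys in exchange for the enumeration is independence from any higher-dimensional evaluation and the identification of the precise obstructing topologies --- the tadpole graphs --- which the text then exploits in footnote~\ref{FootTadpolesDisappear} and Remark~\ref{RemManySeparateVanish}.
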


\begin{proof}
Equality~\eqref{EqG3Triv2D} in Part~1 of Proposition~\ref{PropG3Triv2DnotLeibnizGraphs} is verified in $d=2$ by straightforward calculation with differential\/-\/polynomial coefficient (multi)\/vectors.

To explore \emph{why} equality~\eqref{EqG3Triv2D} is valid in $d=2$, let us inspect whether it is the standard, working in all dimensions $d\geqslant2$, Leibniz\/-\/graph mechanism that would ensure the vanishing, $Q_{\gamma_3}\bigl(P(\varrho)\bigr) - \lshad P(\varrho), \smash{\vec{X}}([\varrho]) \rshad \doteq 0$ for any~$\varrho(x,y)$, by force of the Jacobi identity realized by Kontsevich graphs. (We claim that it is \emph{not only} this mechanism which does the job.)

For this, we list all connected directed graphs built over two sinks of in\/-\/degree~$1$
from one trident and two wedges, without co\/-\/directed double or triple edges, and with none or one tadpole. (The Jacobiator vertex with three outgoing edges will be expanded to the linear combination of three Kontsevich's graphs, see~\cite{f16}.
) There are three admissible Leibniz graphs without tadpoles,
\[
\begin{array}{lll}
\phantom{1}\text{1.}\quad (\mathsf{3},\mathsf{4}; \mathsf{2},\mathsf{4}; \mathsf{0},\mathsf{1},\mathsf{2}),\quad &
\phantom{1}\text{2.}\quad (\mathsf{1},\mathsf{3}; \mathsf{2},\mathsf{4}; \mathsf{0},\mathsf{2},\mathsf{3}),\quad &
\phantom{1}\text{3.}\quad (\mathsf{1},\mathsf{4}; \mathsf{2},\mathsf{4}; \mathsf{0},\mathsf{2},\mathsf{3}),
\end{array}
\]
where $\mathsf{0},\mathsf{1}$ are the sinks, vertices $\mathsf{2},\mathsf{3}$ are wedge tops, and vertex~$\mathsf{4}$ is the top of the trident. Likewise, we have nine admissible Leibniz graphs with one tadpole and sinks of in\/-\/degree~$1$:
\[
\begin{array}{lll}
\phantom{1}\text{4.}\quad (\mathsf{2},\mathsf{4}; \mathsf{2},\mathsf{4}; \mathsf{0},\mathsf{1},\mathsf{2} ),\quad &
\phantom{1}\text{5.}\quad (\mathsf{2},\mathsf{4}; \mathsf{2},\mathsf{4}; \mathsf{0},\mathsf{1},\mathsf{3} ),\quad &
\phantom{1}\text{6.}\quad (\mathsf{2},\mathsf{3}; \mathsf{2},\mathsf{4}; \mathsf{0},\mathsf{1},\mathsf{2} ),
\\
\phantom{1}\text{7.}\quad (\mathsf{2},\mathsf{3}; \mathsf{2},\mathsf{4}; \mathsf{0},\mathsf{1},\mathsf{3} ),\quad &
\phantom{1}\text{8.}\quad (\mathsf{1},\mathsf{2}; \mathsf{2},\mathsf{4}; \mathsf{0},\mathsf{2},\mathsf{3} ),\quad &
\phantom{1}\text{9.}\quad (\mathsf{1},\mathsf{3}; \mathsf{2},\mathsf{3}; \mathsf{0},\mathsf{2},\mathsf{3} ),
\\
\text{10.}\quad (\mathsf{1},\mathsf{4}; \mathsf{2},\mathsf{3}; \mathsf{0},\mathsf{2},\mathsf{3} ),\quad &
\text{11.}\quad (\mathsf{1},\mathsf{3}; \mathsf{3},\mathsf{4}; \mathsf{0},\mathsf{2},\mathsf{3} ),\quad &
\text{12.}\quad (\mathsf{1},\mathsf{4}; \mathsf{3},\mathsf{4}; \mathsf{0},\mathsf{2},\mathsf{3} ).
\end{array}
\]
(There remain four connected Leibniz graphs with \emph{two} tadpoles but they are irrelevant for our present attempt to balance Eq.~\eqref{EqG3Triv2D} using the topologies of Kontsevich graph expansions in the right\/-\/hand side.)

When the wedge graph~$P$ acts on the `sunflower' $1$-\/vector graph~$X^{\gamma_3}$, three graph topologies with one tadpole are produced, among others (the tadpole is absent from all the other topologies that appear in $\lshad P,X^{\gamma_3}\rshad$):
\begin{equation}\label{EqABCgraphs}
\text{A.}\quad (\mathsf{0},\mathsf{4}; \mathsf{1},\mathsf{3}; \mathsf{3},\mathsf{5}; \mathsf{3},\mathsf{4} ),
\qquad
\text{B.}\quad (\mathsf{0},\mathsf{3}; \mathsf{1},\mathsf{3}; \mathsf{3},\mathsf{5}; \mathsf{3},\mathsf{4} ),
\qquad
\text{C.}\quad (\mathsf{2},\mathsf{5}; \mathsf{2},\mathsf{4}; \mathsf{2},\mathsf{3}; \mathsf{0},\mathsf{1} ).
\end{equation}
In Kontsevich's nonzero graph~B, vertex~$\mathsf{3}$ has in\/-\/degree four (tadpole counted). But no expansion --\,of Leibniz graph from the above list of twelve\,-- into Kontsevich graphs results in a graph with vertex of in\/-\/degree four.
Independently, nonzero bi\/-\/vector graph~C --\,with tadpole on vertex~$\mathsf{2}$ at distance two from either sink and with double edge $\mathsf{3}\rightleftarrows\mathsf{4}$\,-- is not obtained in the Kontsevich graph expansion of any of the relevant bi\/-\/vector Leibniz graphs 4--7 from the above list.
Thirdly, only the Leibniz graph~8 reproduces nonzero graph~A, but the Kontsevich graph expansion of~8 contains five more graphs (with a tadpole at distance one from the sink), none of which appears in the linear combination $\smash{Q_{\gamma_3} - \lshad P,X^{\gamma_3}\rshad}$ of Kontsevich graphs. Therefore, we have that
$
\smash{Q_{\gamma_3} - \lshad P,X^{\gamma_3}\rshad \neq \Diamond\bigl(P,P,\tfrac{1}{2}\lshad P,P\rshad \bigr)}$,
for any values of coefficients${}\in\BBR$ near Leibniz graphs in the right\/-\/hand side. The proof is complete.
\end{proof}

The fact of Poisson trivialization of the tetrahedral\/-\/graph flow $\smash{\dot{P}} = Q_{\gamma_3}(P)$ in dimension two, $P\in\smash{\Gamma\bigl(\bigwedge^2 TM^2_{\text{aff}}\bigr)}$, impossible in this or any higher dimension $d\geqslant 2$ through the mechanism of vanishing by force of the Jacobi identity as the \emph{only} obstruction, implies the existence of other analytic mechanism(s); those can work in combination with the former.

\begin{rem}\label{RemManySeparateVanish}
When all the sums over repeated indices, each running from~$1$ to the finite dimension $d=2$, are expanded in the left\/-\/hand side of the identity
$Q_{\gamma_3}\bigl(P(\varrho)\bigr) - \lshad P(\varrho), \vec{X}\bigl(P(\varrho)\bigr) \rshad = $
$0\in\Gamma\smash{\bigl(\bigwedge^2 TM^2_{\text{aff}}\bigr)}$, the arithmetic cancellation mechanism works several times: the l.-h.s.\ splits into sums which cancel out separately from each other.
For instance, when the Poisson differential $\lshad P(\varrho),\cdot\rshad$ acts on~$\smash{\vec{X}}$ and this $1$-\/vector gets into a sink of the wedge graph of bi\/-\/vector, thus forming graphs~A and~B in particular (see Eq.~\eqref{EqABCgraphs}), the wedge top coefficient~$\varrho$ has no derivative(s) falling on~it. Yet no such terms occur at $d=2$ in the graphs of~$Q_{\gamma_3}$ with strictly positive in\/-\/degrees of internal vertices. Hence the two parts --\,with(out) zero\/-\/order derivative of $\varrho$\,-- of the differential\/-\/polynomial coefficient in the identity's 
l.\/-\/h.s.\ 
vanish simultaneously.
\end{rem}

\begin{rem}[on $GL(\infty)$-\/invariants parent to $GL(d)$-\/invariants]\label{RemGLinftyInvariants}
Kontsevich's construction of tensors from Poisson bi\/-\/vectors by using graphs is well\/-\/behaved under affine coordinate changes on the underlying manifold, thanks to contraction of upper indices of Poisson tensors~$(P^{ij})$ in the arrowtail vertices and of lower indices in derivations at the edge arrowheads. The Jacobians of coordinate changes then belong to the general linear group, while the affine shifts are not felt at all by the index contractions. But, uniform over the dimensions~$d$ of affine Poisson manifolds, the graph technique builds invariants of linear representations for~$GL(\infty)$. Linearly independent as graph expressions, such invariants
can be linearly tied after projection to a given finite dimension~$d$, where they become tensor\/-\/valued $GL(d)$-\/invariants.\footnote{%
The main example is given by the linear independence (modulo zero graphs and Leibniz graphs) of three Kontsevich graphs in the $\gamma_3$-\/flow and, on the other hand, of the Poisson\/-\/exact bi\/-\/vector graphs $\lshad P,X^{\gamma_3}\rshad$ with the `sunflower' $1$-\/vector~$X^{\gamma_3}$: an insoluble equation for graphs,
$Q_{\gamma_3} - \lshad P,X^{\gamma_3}\rshad = 0$, turns at $d=2$ into an identity of bi\/-\/vector's differential\/-\/polynomial coefficients,
$\smash{Q_{\gamma_3}\bigl(P(\varrho)\bigr) - \lshad P(\varrho), \smash{\vec{X}}\bigl(P(\varrho)\bigr) \rshad} \equiv 0$, for the Poisson structure $P(\varrho) = \varrho(x,y) \cdot \bigl( \dd_x\otimes\dd_y - \dd_y\otimes\dd_x \bigr)$ in every chart of the affine twofold~$M^2_{\text{aff}}$.}
\end{rem}

To conclude this section, we note that the original graph language of~\cite{Ascona96} for construction of tensor\/-\/valued invariants of~$GL(\infty)$ is no longer enough for the study of Poisson (non)\/triviality for graph\/-\/cocycle flows on spaces of Poisson brackets over affine manifolds of given 
dimension~$d$. To manage the bound $d<\infty$, one must either take a quotient over the (unknown) new linear relations between Kontsevich's graphs, or work \textit{ab initio} with $GL(d)$-\/invariants.

We note also that in a given dimension, the problem of Poisson (non)triviality for universal flows (from~\cite{Ascona96} and subsequent work~\cite{OrMorphism2018}) itself is meaningful: \textit{a priori} nontrivial deformation can become trivial for a class of Poisson geometries at given~$d$. Let us verify the triviality of the tetrahedral\/-\/graph flow $\smash{\dot{P}}([\varrho],[a]) = Q_{\gamma_3}\bigl(P(\varrho,[a])\bigr)$ on the space of Nambu\/-\/determinant Poisson bi\/-\/vectors $P(\varrho,[a])$ over an affine threefold~$\BBR^3$.

\section{Nambu\/-\/determinant Poisson brackets: the $\gamma_3$-\/flow over~$\BBR^3$}\label{ChNambu3D}

\begin{define}
The \emph{Nambu\/-\/determinant Poisson bracket} on~$\mathbb{R}^{d \geqslant 3}$ is the derived bi\/-\/vector $P(\varrho, [a])$
$\stackrel{\text{def}}{=} \lshad \schouten{\ldots \schouten{\varrho \cdot \partial_{x^1} \wedge \ldots \wedge \partial_{x^d}, a_1}, \ldots ]\!], a_{d-2}}$, 
where $\varrho(\bx)\cdot \partial_{\bx}$ is a $d$-\/vector field and scalar functions~$a_{\ell}$ are Casimirs ($1 \leqslant \ell \leqslant d-2$).
In global (e.g., Cartesian) coordinates $x^1,\ldots,x^d$ on~$\mathbb{R}^d$, the Nambu bracket of $f,g\in \smash{C^1(\mathbb{R}^d)}$ is expressed by the formula
\begin{equation}\label{EqNambuCivita}
\{f,g\}_{P(\varrho,[\ba])} = \varrho(\bx) \cdot \sum\nolimits_{i_1\ldots,i_d=1}^d \varepsilon^{\vec{\imath}} \cdot \partial_{i_1}(a_1) \cdots \partial_{i_{d-2}}(a_{d-2}) \cdot \partial_{i_{d-1}}(f) \cdot \partial_{i_{d}}(g),
\end{equation}
where $\varepsilon^{\vec{\imath}} = \varepsilon^{i_1,\ldots,i_d}$ is the Levi\/-\/Civita symbol on~$\mathbb{R}^d$: $\varepsilon^{\sigma(1,\ldots,d)} = (-)^\sigma$ for $\sigma \in S_d$, 
else 
zero.\footnote{\label{FootNambuFrom2D}%
The usual view of Nambu\/--\/Poisson bracket~\eqref{EqNambuCivita} on~$\BBR^d$ is that the Jacobian determinant is multiplied by an arbitrary factor $\varrho(x^1,\ldots,x^d)$ which behaves appropriately under coordinate changes~$\boldsymbol{x} \rightleftarrows {\mathbf{x}}'$.
Our viewpoint is that $\varrho(\bx)\,\dd_{x^1}\wedge\ldots\wedge\dd_{x^d}$ is a top\/-\/degree multi\/-\/vector on~$\BBR^d$ for any~$d\geqslant2$, so that for all dimensions greater than two, the Nambu\/-\/determinant Poisson bi\/-\/vector is \emph{derived}:
$P(\varrho,[\ba]) = \lshad\ldots\lshad \varrho\,\dd_{\bx}, a_1\rshad\ldots, a_{d-2}\rshad$ with $d-2$ Casimirs~$\ba$. That is, Nambu structures generalize the bi\/-\/vector $\varrho(x,y)\cdot (\dd_x\otimes\dd_y - \dd_y\otimes\dd_x)$ from~$\BBR^2$ to~$\BBR^d$ for~$d>2$. 
The Casimirs~$a_\ell$ Poisson\/-\/commute with any $f\in C^1(\BBR^d)$; the symplectic leaves are intersections of level sets $a_\ell = \const(\ell)\in\BBR$, so that for any $d\geqslant 2$ these leaves are at most two\/-\/dimensional.}
\end{define}

\begin{rem}\label{RemReduceDimInNambu}
Nambu\/--\/Poisson brackets on~$\mathbb{R}^{d \geqslant 3}$ can be obtained from Nambu\/--\/Poisson brackets on~$\mathbb{R}^{d+1}$ by taking $a_{d-1} = \pm x^{d+1}$ 
on~$\mathbb{R}^{d+1}$ and by excluding the last Cartesian coordinate~$x^{d+1}$ from the list of arguments for~$\varrho(\bx)$ and $a_1,\ldots,a_d(\bx)$.
\quad 
$\bullet$\ By doing the above for $d+1=3$, one obtains a generic bi\/-\/vector $P = \varrho(x^1,x^2)\,\partial_{x^1} \wedge \partial_{x^2}$, which is Poisson on~$\mathbb{R}^2$, and the (Nambu) Poisson bracket $\{f,g\}(x,y) = \varrho(x,y)\cdot(f_x \cdot g_y - f_y \cdot g_x)$.
\end{rem}

We recall from~\cite[\S 4.1]{skew21} that, given a suitable graph cocycle~$\gamma$ (e.g., $\gamma_3$ which we take here), Kontsevich's $\gamma$-\/flow $\dot{P} = \Or(\gamma)(P^{\otimes \#\text{Vert}(\gamma)})$ restricts to the set of Nambu\/--\/Poisson bi\/-\/vectors $P(\varrho,[\ba])$ such that the velocity of a Casimir~$a_\ell$ is still encoded by Formality graphs~\cite[Proposition 2]{skew21}: $\dot{a}_\ell = \Or(\gamma)(P\otimes \cdots \otimes P \otimes a_\ell)$, whence the velocity $\dot{\varrho}([\varrho],[\ba])$ is expressed from the known~$\dot{\ba}$ and~$\dot{P}$ (see~\cite[Corollary 3]{skew21}).
The Leibniz rule, balancing $\dot{P}$~with $\dot{\varrho}, \dot{\ba}$ for $P$~linear in~$\varrho$ and the first jets of all~$a_\ell$, is then a tautology.

Independently, if $\smash{\vec{Y}}$ is any $C^1$-\/vector field on~$\mathbb{R}^d$ with Nambu\/--\/Poisson bi\/-\/vectors $P(\varrho,[\ba])$, then the evolution $L_{\vec{Y}}(a_\ell) = \schouten{\smash{\vec{Y}},a_\ell}$ of scalar functions and $L_{\vec{Y}}(\varrho \cdot \partial_{\bx}) = \schouten{\smash{\vec{Y}}, \varrho \cdot \partial_{\bx}}$ of $d$-\/vectors 
correlates, by the Leibniz\/-\/rule shape of the Jacobi identity for the Schouten bracket $\schouten{\cdot,\cdot}$, with 
evolution $L_{\vec{Y}}(P) = \schouten{\smash{\vec{Y}}, P}$ of Nambu bi\/-\/vector $P = \schouten{\varrho \cdot \partial_{\bx}, \cdots \ba \cdots }$, see~\cite[\S 2.1]{skew21}.

\begin{theor}[\cite{skew21}]\label{ThG3trivial3D}
In dimension $d=3$, the tetrahedral\/-\/graph flow $\smash{\dot{P}}=Q_{\gamma_3}(P)$ on the space of Poisson bi\/-\/vectors~$P$ has a well\/-\/defined restriction to the subspace of Nambu\/-\/determinant Poisson bi\/-\/vectors $P(\varrho,[a])$ on~$\BBR^3$,
and this restriction is Poisson\/-\/cohomology trivial: $\dot{P}([\varrho],[a]) ={}$
$ \lshad P(\varrho,[a]), \vec{X}{}^{\gamma_3}([\varrho],[a]) \rshad$.
The equivalence class $\vec{X}{}^{\gamma_3}$ \textup{mod}~$\lshad P,H(\varrho,a)\rshad$ of trivializing vector field is represented by the vector
$\smash{\vec{X}}=\sum\nolimits_{\vec{\imath},\vec{\jmath},\vec{k}} \veps^{\vec{\imath}}\veps^{\vec{\jmath}}\veps^{\vec{k}}\cdot X_{\vec{\imath}\,\vec{\jmath}\,\vec{k}}$
with
{\footnotesize
\begin{align*}
X_{\vec{\imath}\,\vec{\jmath}\,\vec{k}} =
&+12\varrho \varrho_{x^{k_2}} \varrho_{x^{i_1}x^{j_1}} a_{x^{k_3}} a_{x^{i_2}x^{j_2}} a_{x^{i_3}x^{j_3}} \cdot \dd/\dd x^{k_1}
+ 48\varrho \varrho_{x^{j_3}} \varrho_{x^{i_1}x^{j_1}} a_{x^{k_3}} a_{x^{i_2}x^{j_2}} a_{x^{i_3}x^{k_1}} \cdot \dd/\dd x^{k_2} \\
{}&+8\varrho_{x^{j_2}} \varrho_{x^{i_1}x^{k_1}} \varrho_{x^{i_2}x^{k_2}} a_{x^{i_3}} a_{x^{j_3}} a_{x^{k_3}} \cdot \dd/\dd x^{j_1}
- 40\varrho_{x^{i_3}} \varrho_{x^{j_2}} \varrho_{x^{i_1}x^{k_1}} a_{x^{j_3}} a_{x^{k_3}} a_{x^{i_2}x^{k_2}} \cdot \dd/\dd x^{j_1} \\
&+8\varrho_{x^{i_3}} \varrho_{x^{j_2}} \varrho_{x^{k_3}} a_{x^{j_3}} a_{x^{i_1}x^{k_1}} a_{x^{i_2}x^{k_2}} \cdot \dd/\dd x^{j_1} 
{}+ 24\varrho_{x^{j_2}} \varrho_{x^{k_3}} \varrho_{x^{i_1}x^{k_1}} a_{x^{i_3}} a_{x^{j_3}} a_{x^{j_1}x^{k_2}} \cdot \dd/\dd x^{i_2} \\
&- 
{12\varrho^2 \varrho_{x^{k_2}} a_{x^{i_1}x^{j_1}} a_{x^{i_2}x^{j_2}} a_{x^{i_3}x^{j_3}x^{k_3}} \cdot \dd/\dd x^{k_1}}
+ 
{24\varrho \varrho_{x^{j_2}} \varrho_{x^{k_1}} a_{x^{k_2}} a_{x^{i_1}x^{j_1}} a_{x^{i_3}x^{j_3}x^{k_3}} \cdot \dd/\dd x^{i_2}} \\
{}&-36\varrho \varrho_{x^{i_2}} \varrho_{x^{j_2}} a_{x^{k_2}} a_{x^{i_1}x^{j_1}} a_{x^{i_3}x^{j_3}x^{k_3}} \cdot \dd/\dd x^{k_1}
+ 8\varrho_{x^{i_2}} \varrho_{x^{j_1}} \varrho_{x^{k_1}} a_{x^{j_2}} a_{x^{k_2}} a_{x^{i_3}x^{j_3}x^{k_3}} \cdot \dd/\dd x^{i_1} \\
&- 
{8\varrho_{x^{j_1}} \varrho_{x^{k_1}} \varrho_{x^{i_3}x^{j_3}x^{k_3}} a_{x^{i_2}} a_{x^{j_2}} a_{x^{k_2}} \cdot \dd/\dd x^{i_1}},
\end{align*}

}
\noindent%
here $\smash{\vec\imath} = (i_1,i_2,i_3)$, $\smash{\vec\jmath} = (j_1,j_2,j_3)$, $\smash{\vec{k}} = (k_1,k_2,k_3)$ and $\veps^{pqr}$~is the Levi\/-\/Civita symbol on~$\BBR^3$.
\end{theor}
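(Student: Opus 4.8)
The plan is to establish all three assertions---well\/-\/definedness of the restriction, Poisson\/-\/triviality, and the explicit shape of $\vec{X}^{\gamma_3}$---by turning the graph\/-\/level coboundary equation $Q_{\gamma_3}(P) = \schouten{P, \vec{X}}$ into an identity of bi\/-\/vectors whose coefficients are differential polynomials in the two scalar fields $\varrho$ and~$a$, and then solving a finite linear system for the coefficients of a structured ansatz. For the well\/-\/definedness I would invoke the recollection from~\cite{skew21} reproduced above: the Casimir velocity $\dot{a} = \Or(\gamma_3)(P\otimes\cdots\otimes P\otimes a)$ is again graph\/-\/encoded, hence differential\/-\/polynomial in $[\varrho],[a]$, and together with the induced velocity of the top\/-\/vector $\varrho\,\partial_{\bx}$ it forces, through the Leibniz\/-\/rule shape of the Jacobi identity for $\schouten{\cdot,\cdot}$, the velocity $\dot{P} = \schouten{\varrho\,\partial_{\bx},\dot a} + \schouten{\dot{(\varrho\,\partial_{\bx})}, a}$ to stay inside the Nambu class. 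This identifies $\dot{P}([\varrho],[a])$ as a concrete differential polynomial carrying Levi\/-\/Civita symbols and fixes its homogeneity.

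Next I would write $P^{ij} = \varrho\,\varepsilon^{ijk}\,\partial_k a$ on~$\mathbb{R}^3$, substitute four copies into the three\/-\/graph formula~\eqref{EqG3FlowMKgraphs}, and expand by the Leibniz rule; this produces $Q_{\gamma_3}(P(\varrho,[a]))$ as a sum of monomials in the jets of $\varrho$ and~$a$, each weighted by a product of four Levi\/-\/Civita symbols. Counting degrees pins the homogeneity completely: the left\/-\/hand side is of bi\/-\/degree $(4,4)$ in $(\varrho,a)$ and of total differential order~$10$, so since $P$ is of bi\/-\/degree $(1,1)$ the unknown vector field must be of bi\/-\/degree $(3,3)$ and differential order~$8$---exactly the shape of the asserted formula (three factors of $\varrho$, three of~$a$, nine indices $\{i_\bullet,j_\bullet,k_\bullet\}$ contracted against $\varepsilon^{\vec{\imath}}\varepsilon^{\vec{\jmath}}\varepsilon^{\vec{k}}$, one of which remains free as the vector's upper index). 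I would therefore take $\vec{X} = \sum_{\vec{\imath},\vec{\jmath},\vec{k}} \varepsilon^{\vec{\imath}}\varepsilon^{\vec{\jmath}}\varepsilon^{\vec{k}}\, X_{\vec{\imath}\,\vec{\jmath}\,\vec{k}}$ with $X_{\vec{\imath}\,\vec{\jmath}\,\vec{k}}$ the most general linear combination, with undetermined rational coefficients, of all monomials of that bi\/-\/degree and order bearing a single free upper index.

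Then I would compute the right\/-\/hand side $\schouten{P(\varrho,[a]), \vec{X}}$ with this ansatz, again expanding by the Leibniz rule, and reduce both sides to one normal form. The decisive, genuinely dimension\/-\/specific step is the systematic use of the $\mathbb{R}^3$ contraction identities for products of Levi\/-\/Civita symbols, which collapse each pair $\varepsilon\,\varepsilon$ to an antisymmetrized product of Kronecker deltas; it is precisely these identities---unavailable as graph relations in general~$d$, cf.\ Remark~\ref{RemGLinftyInvariants}---that bring the left\/-\/hand side into the span of the right\/-\/hand side and make the otherwise graph\/-\/insoluble equation solvable over a fixed threefold. After this reduction, matching the coefficient of each independent differential monomial yields an over\/-\/determined linear system for the unknowns in the ansatz.

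Finally I would solve that system: its consistency is exactly the statement of Poisson\/-\/triviality, and one particular solution produces the vector field displayed in the theorem. The solution is unique only up to the kernel generated by Hamiltonian shifts $\schouten{P, H(\varrho,a)}$, which is what the stated equivalence class $\vec{X}^{\gamma_3} \bmod \schouten{P,H}$ records. As an independent check I would verify, via the reduction $a = x^3$ of Remark~\ref{RemReduceDimInNambu}, that $\vec{X}$ projects to the sunflower trivializer of Proposition~\ref{PropG3Triv2DnotLeibnizGraphs} over~$\mathbb{R}^2$. The main obstacle is the third step: after both Leibniz expansions each side carries four nested Levi\/-\/Civita symbols with many contracted indices, so the bookkeeping is formidable and only the $d=3$ $\varepsilon$\/-\/identities funnel everything into the target span---the real labour is organizing this reduction (in practice by computer algebra) rather than any single clever manipulation.
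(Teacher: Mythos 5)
Your plan is sound and would in principle produce the theorem, but it is not the route the paper (and~\cite{skew21}) actually takes: you propose to solve the full coboundary equation $Q_{\gamma_3}(P)=\lshad P,\smash{\vec{X}}\rshad$ head-on, with a general ansatz of all differential monomials of bi-degree $(3,3)$ and order $8$ contracted against $\veps^{\vec{\imath}}\veps^{\vec{\jmath}}\veps^{\vec{k}}$, whereas the paper explicitly declines to attack this ``big problem'' (a $244$-parameter micro-graph ansatz matched against a bi-vector identity) and instead exploits the derived-bracket structure $P(\varrho,[a])=\lshad\varrho\,\dd_{\bx},a\rshad$. The key point you miss is that, by the Leibniz-rule shape of the Jacobi identity for the Schouten bracket, any vector field $\smash{\vec{X}}$ reproducing the two \emph{known} velocities $\dot{a}=-\lshad\smash{\vec{X}},a\rshad$ and $\dot{\varrho}\,\dd_x\wedge\dd_y\wedge\dd_z=\lshad\varrho\,\dd_x\wedge\dd_y\wedge\dd_z,\smash{\vec{X}}\rshad$ automatically satisfies $\dot{P}=\lshad P,\smash{\vec{X}}\rshad$ for the derived bi-vector; so the paper solves two much smaller linear systems ($2961$ and $6679$ equations on a scalar and a top-degree multivector) and only then \emph{verifies} the full coboundary equation. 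Your approach buys independence from the prior computation of $\dot{a}$ and $\dot{\varrho}$ (you only need it for well-definedness, which both arguments import from~\cite{skew21}), and your homogeneity count $(4,4)$/order~$10$ forcing $(3,3)$/order~$8$ is correct; the paper's approach buys a drastic reduction in the size of the linear algebra plus the conceptual content that the trivialization factors through the building blocks of the Nambu bracket. Two further points of divergence: the paper's unknowns are coefficients of \emph{micro-graphs} (a structured, graph-generated basis in which the necessity of a tadpole term, Proposition~\ref{PropNoXwithoutTadpole}, is visible), not raw monomials; and the normal-form reduction is done by expanding all index sums over $d=3$ and comparing differential monomials directly, rather than by repeated $\veps\,\veps\to\det(\delta)$ contractions --- computationally equivalent, but your phrasing attributes the dimension-specific collapse to a mechanism the paper does not invoke in this form.
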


Our next 
finding in~\cite[Theorem~8]{skew21} is that for the graph cocycle~$\gamma_3$ and $d=3$, the action of vector field~$\smash{\vec{X}}$ (which trivializes the $\gamma_3$-\/flow $\dot{P} = Q_{\gamma_3}([P]) = \schouten{P, \smash{\vec{X}}}$ of Nambu brackets~$P$ on~$\mathbb{R}^3$) upon $P(\varrho,[a])$ factors through the initially known --\,from~$\gamma_3$\,-- velocities of~$a$ and~$\varrho$: having solved~\eqref{EqCoboundaryInPoly} for~$\smash{\vec{X}}$, we then verified that $\dot{a} = \schouten{a,\smash{\vec{X}}}$ and $\dot{\varrho}\cdot\partial_{\bx} = \schouten{\varrho \cdot \partial_{\bx}, \smash{\vec{X}}}$.

By using this factorization --\,i.e.\ the lifting of the sought vector field's action on the elements of~$P(\varrho,[a])$\,-- the other way round, we create an economical scheme to inspect the \emph{existence} of trivializing vector field~$\smash{\vec{X}}$ for larger problems (i.e.\ for bigger graph cocycles or higher dimension $d \geqslant 3$).
When this shortcut works, so that $\smash{\vec{X}}$~is found, it saves much effort.
Otherwise, to establish the (non)\/existence of~$\smash{\vec{X}}$ one deals with a larger PDE, namely Eq.~\eqref{EqCoboundaryInPoly}.



\begin{define}\label{DefMicroGraph}
Fix the dimension~$d \geqslant 2$.
A \emph{micro\/-\/graph} is a directed graph built over $m \geqslant 0$~sinks, over $n \geqslant 0$~aerial vertices with out\/-\/degree~$d$ and ordering of outgoing edges, and over~$n$ items of $(d-2)$-\/tuples of aerial vertices with in\/-\/degree${}\geqslant 1$ and no outgoing edges.
$\bullet$\ The correspondence between micro\/-\/graphs and differential\/-\/polynomial expressions in $\varrho,a_1,\ldots,a_{d-2}$ and the content of sink(s) is defined in the same way as the mapping of Kontsevich's graphs to multi\/-\/differential operators on~$C^\infty(M^d_{\text{aff}})$, see~\cite[\S 2.2]{skew21} or~\cite{Ascona96}.
$\bullet$\ Same as for Kontsevich graphs, a micro\/-\/graph is \emph{zero} if it admits a sign\/-\/reversing automorphism, i.e.\ a symmetry which acts by parity\/-\/odd permutation on the ordered set of edges. But now, in finite dimension~$d$ of~$M^d_{\text{aff}}$, a micro\/-\/graph is \emph{vanishing} if the differential polynomial (in~$\varrho$ and $a_1$,\ $\ldots$,\ $a_{d-2}$), obtained by expanding all the sums over indices that decorate the edges, vanishes identically.\footnote{%
There are nonzero but still vanishing micro\/-\/graphs.
}
\end{define}

\begin{example}\label{ExNambuByMicroGraphs}
Nambu\/--\/Poisson brackets $P(\varrho,[a_1],\ldots,[a_{d-2}])$ on~$\mathbb{R}^d$ are realized using micro\/-\/graphs, namely by resolving $\varrho(\bx) \cdot \varepsilon^{i_1,\ldots,i_d}$ in one vertex against $d-2$ vertices with the Casimirs $a_1,\ldots,a_{d-2}$.
The out\/-\/degree of vertex with $\varrho(\bx) \cdot \varepsilon^{\vec{\imath}}$ equals~$d$; the in\/-\/degree of each vertex with a Casimir equals~$1$ and its out\/-\/degree is zero: the Casimir vertices are terminal (not to be confused with the two sinks, of in\/-\/degree~$1$, for the Poisson bracket arguments).
The ordered $d$-\/tuple of edges is decorated with summation indices:
for the Levi\/-\/Civita symbol $\varepsilon^{i_1,\ldots,i_d}$ 
in their common arrowtail vertex, the range is $1 \leqslant i_{\ell} \leqslant d$ for~$1 \leqslant \ell \leqslant d$.
\end{example}

\begin{rem}\label{RemExpandMKgraphToMicro}
If the wedge tops contain Nambu\/--\/Poisson bi\/-\/vectors $P(\varrho,[a])$ on~$\mathbb{R}^d$, every Kontsevich graph expands to a linear combination of micro\/-\/graphs: the arrow(s) originally in\/-\/coming to an aerial vertex with a copy of~$P$, now work(s) by the Leibniz rule over the $d-1$ vertices, with $\varrho \cdot \varepsilon^{\vec{\imath}}$ and with $a_1,\ldots,a_{d-2}$, in the subgraphs $P(\varrho,[a_1],\ldots,[a_{d-2}])$ of the micro\/-\/graph.%
\footnote{But not every micro\/-\/graph is obtained from a Kontsevich graph by resolving the old aerial vertices into subgraphs.
This is what makes interesting our graphical rephrasing of the Poisson (non)\/triviality problem for the re\-stric\-ti\-on of Kontsevich graph flows to the class of Nambu\/-\/determinant Poisson brackets.}
\end{rem}

\begin{example}\label{ExExpandG3Micro3D}
In $d=3$, the micro\/-\/graph expansion of $Q_{\gamma_3}(P)$ for $P(\varrho,[a])$ over~$\mathbb{R}^3$ consists of directed graphs on $2$~sinks for~$f$ and~$g$, on four terminal vertices --\,for copies of the Casimir $a$\,-- without outgoing arrows, and on four vertices for~$\varrho \cdot \varepsilon^{ijk}$ with three ordered outgoing edges.
In every micro\/-\/graph in bi-\/vector $Q_{\gamma_3}(P)$ there are $12$~edges, with exactly one going 
towards~$f$ and one to~$g$ in the sinks.
To have a solution~$\smash{\vec{X}}$ of the equation $Q_{\gamma_3}(P(\varrho,[a])) = \schouten{P,\smash{\vec{X}}}$ using micro\/-\/graphs that encode $\smash{\vec{X}}([\varrho],[a])$, we thus need micro\/-\/graphs on one sink, three terminal vertices with~$a$, 
and three trident vertices for $\varrho \cdot \varepsilon^{ijk}$. 
Of the nine edges in each micro\/-\/graph, exactly one goes to the sink, so that $\smash{\vec{X}}$~is a $1$-vector.
\end{example}

\begin{example}\label{ExSunflower3D}
In $d=3$, two Kontsevich's graphs of the `sunflower' $1$-\/vector (which trivializes the restriction of tetrahedral\/-\/graph flow~\eqref{EqG3FlowMKgraphs} to the space of bi\/-\/vectors on an affine twofold) expand to a linear combination of $42$~one\/-\/vector micro\/-\/graphs over one sink, three trident vertices, three terminal vertices,
and $3\times3=9$ edges (one into the sink). A tadpole is met in $10=3+3+4$ such micro\/-\/graphs, and the other $32=8\times 4$ have none.
\end{example}

Let us illustrate how the shortcut scheme works.
We now tune a $1$-\/vector field $\smash{\vec{X}}(\varrho,[a])$ for the flow $\dot{P} = Q_{\gamma_3}([P])$ of $P(\varrho,[a])$ over~$\mathbb{R}^3$ such that $\dot{a} = \schouten{a,\smash{\vec{X}}}$ and $\dot{\varrho}\cdot \partial_{\bx} = \schouten{\varrho \cdot \partial_{\bx}, \smash{\vec{X}}}$, whence we verify that $\dot{P} = \schouten{P, \smash{\vec{X}}([\varrho], [a])} \in \operatorname{im} \partial_P$ for the Nambu\/-\/determinant class of Poisson brackets on~$\mathbb{R}^3$.


We first generate all suitable unlabeled micro\/-\/graphs (i.e.\ without distinguishing which sinks are for Casimirs) without tadpoles and with exactly one tadpole.
Next, by deciding on the run which of the four sinks is the argument of $1$-\/vector, we produce $366$ $1$-\/vector fields with differential\/-\/polynomial coefficients in~$\varrho$ and~$a$, encoded by micro\/-\/graphs.
Some of the coefficients are identically zero when the sums over three triples of indices in Levi\/-\/Civita symbols are fully expanded; there remain $244$~nonvanishing 
marker micro\/-\/graphs in the ansatz for the trivializing vector field~$\smash{\vec{X}}$.
Now, we do not attempt to solve the big problem $Q_{\gamma_3}(P) = \schouten{P,\smash{\vec{X}}}$ directly with respect to the $244$~coefficients of nonvanishing marker micro\/-\/graphs.
Instead, let us find a vector field~$\smash{\vec{X}}$, realized by $1$-vector micro\/-\/graphs~$X^\gamma$, which reproduces the known velocities~\cite[Eq.\:(11)]{skew21} 
of~$\varrho$ and Casimir~$a$, that is, we solve the equations $\dot{a} = -\schouten{\smash{\vec{X}},a}$ and $\dot{\varrho}\, \partial_x \wedge \partial_y \wedge \partial_z = \schouten{\varrho\, \partial_x \wedge \partial_y \wedge \partial_z, \smash{\vec{X}}}$ with respect to the coefficients in the micro\/-\/graph ansatz for~$X^\gamma$.
To determine exactly the number of equations in either linear algebraic system we keep track of the number of differential monomials appearing when $\smash{\vec{X}}$~acts on either~$a$ or~$\varrho $ as above
, and we recall also the differential monomials which already appeared in~$\dot{a}$ and~$\dot{\varrho}$ in the left\/-\/hand sides, that is in~\cite[Eq.\:(11)]{skew21}
.
In this way, we detect that the linear algebraic system for~$\dot{a}$ contains $2961$~equations and the system for~$\dot{\varrho}$ contains 
$6679$~equations.
Each equation is a balance of the coefficient of one differential monomial.
We now merge these two systems of linear algebraic equations upon the coefficients of micro\/-\/graphs in the ansatz for the trivializing vector field~$\smash{\vec{X}}$,
and we find a solution.
Only $11$~coefficients are nonzero. 
The analytic formula of this vector field is 
reported in Theorem~\ref{ThG3trivial3D}. 
The three equalities, namely $\dot{a} = -\schouten{\smash{\vec{X}},a}$ and $\dot{\varrho}\, \partial_x \wedge \partial_y \wedge \partial_z = \schouten{\varrho \partial_x \wedge \partial_y \wedge \partial_z, \smash{\vec{X}}}$ implying $Q_{\gamma_3}(P) = \schouten{P,\smash{\vec{X}}}$, are verified immediately. 
Here is the encoding\footnote{\label{FootEncodeG3X3D}%
Vertices are labelled and the sink is indicated; for each micro\/-\/graph, its directed edges are listed in due ordering: e.g., $(\mathsf{6},\mathsf{6})$ is the tadpole~$\mathsf{6}\to\mathsf{6}$
and $(\mathsf{0},\mathsf{4})$, $(\mathsf{0},\mathsf{5})$, $(\mathsf{0},\mathsf{6})$ is a trident Left${}\prec{}$Middle${}\prec{}$Right.} 
of the weighted sum~$X^\gamma$ of these $11$ micro\/-\/graphs which realize the trivializing vector field~$\smash{\vec{X}}$ for the tetrahedral flow $Q_{\gamma_3}(P) = \schouten{P,\smash{\vec{X}}}$ on the space of Nambu\/--\/Poisson structures over~$\mathbb{R}^3$:

{\tiny\label{pElevenMicro3D}
\begin{verbatim}
 16 * [(0,4), (0,5), (0,6), (5,0), (5,1), (5,6), (6,0), (6,2), (6,3)]      (sink 2)
 24 * [(0,4), (0,5), (0,6), (5,0), (5,1), (5,6), (6,1), (6,2), (6,3)]      (sink 2)
 16 * [(0,4), (0,5), (0,6), (5,0), (5,1), (5,2), (6,1), (6,3), (6,5)]      (sink 2)
-16 * [(0,4), (0,5), (0,6), (5,0), (5,1), (5,2), (6,1), (6,3), (6,5)]      (sink 4)
 12 * [(0,4), (0,5), (0,6), (5,1), (5,2), (5,6), (6,1), (6,2), (6,3)]      (sink 3)
-12 * [(0,4), (0,5), (0,6), (5,1), (5,2), (5,6), (6,1), (6,2), (6,3)]      (sink 4)
 24 * [(4,0), (4,1), (4,6), (5,0), (5,1), (5,2), (6,0), (6,2), (6,3)]      (sink 3)
-24 * [(4,0), (4,1), (4,6), (5,0), (5,2), (5,4), (6,0), (6,1), (6,3)]      (sink 2)
  8 * [(4,0), (4,1), (4,5), (5,0), (5,2), (5,6), (6,0), (6,3), (6,4)]      (sink 1)
 -8 * [(4,0), (4,1), (4,5), (5,2), (5,3), (5,6), (6,0), (6,1), (6,4)]      (sink 2)
  8 * [(0,4), (0,5), (0,6), (5,0), (5,1), (5,6), (6,2), (6,3), (6,6)]      (sink 2).
\end{verbatim}

}



\begin{rem}\label{RemTadpoleInXonR3must}
At the level of micro\/-\/graphs, the 
solution $X^{\gamma_3}$ contains a tadpole, i.e.\ a $1$-cycle, in the last graph.
In terms of differential polynomials this means the presence of a deriative~$\partial_{x^i}$ acting on the coefficient~$\varrho(\bx)$ near the Levi\/-\/Civita symbol $\varepsilon^{ijk}$ containing the index~$i$ of the base coordinate~$x^i$ in that derivative; that is, the last term in the vector field~$\smash{\vec{X}}$ contains $\partial_{x^i}(\varrho(\bx)) \cdot \varepsilon^{ijk}$.
\end{rem}

\begin{proposition}\label{PropNoXwithoutTadpole}
Without tadpoles in the micro\/-\/graph ansatz $X^{\gamma_3}$, there is no solution~$\smash{\vec{X}}$ to the trivialization problem $Q_{\gamma_3}(P(\varrho,[a])) = \schouten{P, \smash{\vec{X}}}$ at the level of 
differential polynomials.\footnote{\label{FootProof3DNeedTadpole}%
By setting to zero the coefficients of micro\/-\/graphs with one tadpole, that is by excluding all the differential polynomials in~$\varrho$ and~$a$ which stem from those micro\/-\/graphs with a tadpole, we detect that the linear algebraic system for the coefficients of micro\/-\/graphs without tadpoles has no solution at all.}
\end{proposition}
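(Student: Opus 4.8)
The plan is to reduce the claim to the inconsistency of a finite linear algebraic system, reusing the shortcut factorization set up just above. A tadpole-free trivializing field would be a linear combination $\vec{X} = \sum_\alpha c_\alpha \Gamma_\alpha$ of only those marker micro-graphs $\Gamma_\alpha$ (among the $244$ nonvanishing ones) that carry no $1$-cycle. First I would partition the $244$ markers into the tadpole-free class and the one-tadpole class, and impose $c_\alpha = 0$ on every one-tadpole marker from the outset. This turns the two linear systems of the shortcut --- the $2961$ equations expressing $\dot{a} = -\schouten{\vec{X},a}$ and the $6679$ equations expressing $\dot{\varrho}\cdot\partial_{\bx} = \schouten{\varrho\cdot\partial_{\bx},\vec{X}}$, with right-hand sides the known velocities of~\cite[Eq.~(11)]{skew21} --- into systems over the smaller set of tadpole-free unknowns.

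The key steps, in order, are: \emph{(i)}~generate the tadpole-free micro-graph ansatz; \emph{(ii)}~apply $\schouten{\cdot,a}$ and $\schouten{\varrho\cdot\partial_{\bx},\cdot}$ to it, expand every sum over the three triples of Levi-Civita indices, and collect the resulting differential monomials in $\varrho,a$; \emph{(iii)}~match coefficients against the target monomials of $\dot{a}$ and $\dot{\varrho}$; and \emph{(iv)}~merge the two systems and compare the rank of the coefficient matrix with that of the augmented matrix. The proposition is exactly the assertion that this strict inequality holds, i.e.\ the augmented system is overdetermined and has no solution. To be certain that no non-factoring solution slips through, one should in addition run the same elimination directly on the full equation $Q_{\gamma_3}(P(\varrho,[a])) - \schouten{P,\vec{X}} = 0$, expanded as an identity of differential polynomials, restricted to the tadpole-free ansatz; its inconsistency proves the statement without appeal to the factorization.

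The conceptual crux --- and what pins down where the obstruction sits --- is Remark~\ref{RemTadpoleInXonR3must}: a tadpole on a $\varrho\cdot\veps^{ijk}$ vertex produces the divergence-type contraction $\sum_i \partial_{x^i}(\varrho)\cdot\veps^{ijk}$, in which the differentiation index coincides with an index of the Levi-Civita symbol sitting at the same vertex. No tadpole-free micro-graph can place a derivative back onto its own source Levi-Civita vertex, so this particular structure is structurally absent from the tadpole-free ansatz. The main obstacle is therefore to exhibit the specific differential monomial(s) of $\dot{\varrho}$ that carry this contraction and to verify that they cannot be reconstituted by any linear combination of the tadpole-free markers; in practice this is the content of the rank computation, whose reliability rests on the exhaustive, correct enumeration of the $244$ markers and on carrying out the index expansions in exact arithmetic.
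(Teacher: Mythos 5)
Your proposal matches the paper's proof, which is exactly the computation described in footnote~\ref{FootProof3DNeedTadpole}: set to zero the coefficients of all one-tadpole micro-graphs in the $244$-marker ansatz and verify that the resulting linear algebraic system for the remaining tadpole-free coefficients is inconsistent. Your added caution — that inconsistency of the merged $\dot{a}$/$\dot{\varrho}$ shortcut system alone would not rule out a non-factoring solution, so the elimination must be run on the full coboundary equation $Q_{\gamma_3}(P)-\lshad P,\smash{\vec{X}}\rshad=0$ as an identity of differential polynomials — is a correct and worthwhile refinement of the same computational argument, not a different route.
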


\begin{rem}\label{RemTrivVF3D2Dsunflower}
If $d=2$ and 
(Nambu\/--)\/Poisson brackets on $\mathbb{R}^2$ are $\{f,g\}(x,y) = \varrho \cdot (f_x \cdot g_y - f_y \cdot g_x)$ as in Remark~\ref{RemReduceDimInNambu}, the only possible Kontsevich `sunflower' graphs, built 
from $n=3$ wedges over one sink (see~\eqref{EqG3Triv2D}
), 
tautologically expand to a nontrivial linear combination of nonzero micro\/-\/graphs on three aerial vertices with $\varrho(x,y)\cdot \varepsilon^{i^\alpha j^\alpha}$, $1 \leqslant \alpha \leqslant 3$.
Independently, the linear combination~$X^\gamma$ of micro\/-\/graphs that encode the trivializing vector field $\smash{\vec{X}}([\varrho],[a])$ for Kontsevich's $\gamma_3$-\/flow for Nambu bi\/-\/vectors $P(\varrho,[a])$ on~$\mathbb{R}^3 \ni (x$, $y,z)$, see~Theorem~\ref{ThG3trivial3D}, 
under the reduction $a \mathrel{{:}{=}} z$ and $\varrho = \varrho(x,y)$ becomes a well\/-\/defined vector field on the plane $\mathbb{R}^2 \subset \mathbb{R}^3$: 
the $z$-\/component of $\smash{\vec{X}}([\varrho(x,y)],[z])$ vanishes.
Let us compare the two vector fields on~$\mathbb{R}^2$.
\end{rem}

\begin{proposition}\label{PropDim3Down2trivVFtetra}
The old `sunflower' vector field which trivializes the tetrahedral $\gamma_3$-\/flow for {all} Poisson brackets on~$\mathbb{R}^2$ 
\emph{coincides} with 
the new vector field $\smash{\vec{X}}([\varrho(x,y)],[a=z])$ from the trivialization of $\gamma_3$-\/flow for the Nambu brackets $P(\varrho,[a])$ on~$\mathbb{R}^3$ 
(both viewed as $1$-\/vector fields on~$\mathbb{R}^2$ with differential coefficients in~$[\varrho]$).
\quad $\bullet$\ Yet the linear combination $X^{\gamma_3}_3$ of micro\/-\/graphs over $d=3$ contains \emph{not only} and \emph{not all} the expansions of Kontsevich's graphs from the `sunflower' $1$-\/vector into micro\/-\/graphs over~$d=3$.
\end{proposition}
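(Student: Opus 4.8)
The plan is to prove the two assertions separately, each by an explicit reduction of formulas already at hand, rather than by any new cohomological input. For the main claim (coincidence), I would start from the representative $\smash{\vec X}=\sum_{\vec\imath,\vec\jmath,\vec k}\veps^{\vec\imath}\veps^{\vec\jmath}\veps^{\vec k}X_{\vec\imath\,\vec\jmath\,\vec k}$ of Theorem~\ref{ThG3trivial3D} and perform the substitution $a:=x^3=z$, $\varrho=\varrho(x,y)$. Then $\varrho_{x^3}=\varrho_z\equiv 0$, while $a_{x^i}=\delta^i_3$ and \emph{all} higher derivatives $a_{x^\bullet x^\bullet}$, $a_{x^\bullet x^\bullet x^\bullet}$ vanish identically. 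Scanning the eleven monomials of $X_{\vec\imath\,\vec\jmath\,\vec k}$, exactly the two carrying only first-order derivatives of $a$ survive — the third term (coefficient $+8$, derivative $\dd/\dd x^{j_1}$, Casimir factors $a_{x^{i_3}}a_{x^{j_3}}a_{x^{k_3}}$) and the last term (coefficient $-8$, derivative $\dd/\dd x^{i_1}$, factors $a_{x^{i_2}}a_{x^{j_2}}a_{x^{k_2}}$); every other monomial contains at least one $a_{x^\bullet x^\bullet}$ or $a_{x^\bullet x^\bullet x^\bullet}$ and is killed. In the two survivors the factors $a_{x^{(\cdot)}}=\delta^{(\cdot)}_3$ force the ``third slot'' of each $\veps$ to equal $3$, so each $\veps^{\vec\imath}\veps^{\vec\jmath}\veps^{\vec k}$ collapses to a product of three planar symbols $\veps^{\cdot\cdot 3}$ and every $S_3$-sum becomes a sum over $\{1,2\}$; since the base derivatives $\dd/\dd x^{j_1}$, $\dd/\dd x^{i_1}$ then never carry the index $3$, the $z$-component is automatically zero, in agreement with Remark~\ref{RemTrivVF3D2Dsunflower}.

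Next I would compute the ``sunflower'' representative of Proposition~\ref{PropG3Triv2DnotLeibnizGraphs} directly on $\mathbb{R}^2$ by inserting $P^{ij}=\varrho(x,y)\,\veps^{ij}$ into $\smash{\vec X}=\dd_j\bigl(\dd_k\dd_m(P^{ij})\,\dd_n(P^{k\ell})\,\dd_\ell(P^{mn})\bigr)\,\dd_i$ and expanding by the Leibniz rule. Both expressions are now $1$-vectors on $\mathbb{R}^2$ that are cubic in $\varrho$ and of total differential order five, so each is a finite list of differential monomials in the derivatives of $\varrho$; matching them coefficient-by-coefficient for the $\dd_x$- and $\dd_y$-components settles the equality. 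The point worth emphasizing is that the proposition asserts an \emph{exact} coincidence — the reduced $\mathbb{R}^3$ field equals the sunflower on the nose, with no residual Hamiltonian term $\schouten{P,H}$ — so this step is genuinely a finite comparison of explicit coefficients and not merely an equality modulo $\operatorname{im}\schouten{P,\cdot}$.

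For the bullet I treat the two set-theoretic inclusions independently. Let $S$ be the set of micro-graphs over $d=3$ occurring with nonzero coefficient in the expansion of the two sunflower Kontsevich graphs, and let $T$ be the eleven micro-graphs of $X^{\gamma_3}_3$ listed above. The assertion ``not all'' is $S\not\subseteq T$, which is immediate by counting: by Example~\ref{ExSunflower3D}, $S$ consists of $42$ distinct micro-graphs ($10$ with a tadpole, $32$ without), and these cannot all appear among the at most eleven members of $T$. The assertion ``not only'' is $T\not\subseteq S$, for which cardinality is useless; here I would generate the $42$ micro-graphs of $S$ explicitly and exhibit one member of $T$ that is absent. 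The conceptual discriminant is that every micro-graph of $S$ arises by resolving three wedges, so it admits a bijection from its three trident ($\varrho\cdot\veps$) vertices to its three terminal ($a$) vertices — one distinguished out-edge per trident playing the rôle of the $\dd_i(a)$ slot — whose contraction (deleting the Casimirs and their incoming edges) returns one of the two sunflower bi-vector graphs; a micro-graph of $T$ admitting \emph{no} such matching-and-contraction cannot lie in $S$.

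The hard part will be exactly this ``not only'' direction. Unlike ``not all,'' it cannot be dispatched by counting, and the naive guess — that the tadpole-bearing last graph of $X^{\gamma_3}_3$ is the witness merely because it has a tadpole — does not suffice, since $S$ itself contains ten tadpole micro-graphs (the sunflower Kontsevich graph $(\mathsf0,\mathsf1;\mathsf1,\mathsf3;\mathsf1,\mathsf2)$ already carries a $1$-cycle). One therefore has to make the bouquet/contraction invariant of the previous paragraph precise — noting, as in the footnote to Remark~\ref{RemExpandMKgraphToMicro}, that not every micro-graph descends from a Kontsevich graph — and then verify for a concrete member of the eleven that no admissible trident-to-Casimir bijection contracts to a sunflower. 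In principle a finite table lookup against the $42$ graphs of $S$ would also close the case; the work is in organizing that comparison up to micro-graph isomorphism (the interchangeable tridents and Casimirs) rather than in any hard estimate.
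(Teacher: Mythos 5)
Your strategy is the same as the paper's, and two of the three pieces are handled exactly as the paper handles them. The reduction $a:=z$, $\varrho=\varrho(x,y)$ of the eleven\/-\/term formula in Theorem~\ref{ThG3trivial3D} is correct: precisely the two monomials free of higher derivatives of~$a$ survive (the $+8\,\varrho_{x^{j_2}}\varrho_{x^{i_1}x^{k_1}}\varrho_{x^{i_2}x^{k_2}}a_{x^{i_3}}a_{x^{j_3}}a_{x^{k_3}}\,\dd/\dd x^{j_1}$ term and the $-8\,\varrho_{x^{j_1}}\varrho_{x^{k_1}}\varrho_{x^{i_3}x^{j_3}x^{k_3}}a_{x^{i_2}}a_{x^{j_2}}a_{x^{k_2}}\,\dd/\dd x^{i_1}$ term), the Kronecker deltas collapse each $\veps$ to its planar restriction, and the coincidence with the sunflower is a finite coefficient comparison --- which is also all the paper offers for this half, since its printed proof addresses only the bullet. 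Your cardinality argument for ``not all'' ($42>11$) is exactly the paper's footnote.

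Where you stop short is the ``not only'' direction, and that is the \emph{entire} content of the paper's printed proof. Your contraction criterion (a micro\/-\/graph lies in the sunflower expansion iff some trident\/-\/to\/-\/Casimir matching contracts it onto one of the two sunflower Kontsevich graphs) is the right discriminant, and your warning that a tadpole alone does not certify a witness is well taken --- indeed the paper's witness is \emph{not} the tadpole graph. But you never exhibit the witness or run the check. The paper does: it takes the before\/-\/last micro\/-\/graph $(-8)\cdot[(\mathsf{4},\mathsf{0}),(\mathsf{4},\mathsf{1}),(\mathsf{4},\mathsf{5}),(\mathsf{5},\mathsf{2}),(\mathsf{5},\mathsf{3}),(\mathsf{5},\mathsf{6}),(\mathsf{6},\mathsf{0}),(\mathsf{6},\mathsf{1}),(\mathsf{6},\mathsf{4})]$ with tridents $\mathsf{4},\mathsf{5},\mathsf{6}$, sink $\mathsf{2}$, Casimir vertices $\mathsf{0},\mathsf{1},\mathsf{3}$, and notes that it contains the $3$-cycle $\mathsf{4}\to\mathsf{5}\to\mathsf{6}\to\mathsf{4}$ among tridents. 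The only $3$-cycle of distinct aerial vertices in either sunflower graph is $\mathsf{1}\to\mathsf{2}\to\mathsf{3}\to\mathsf{1}$ in $(\mathsf{0},\mathsf{2};\mathsf{1},\mathsf{3};\mathsf{1},\mathsf{2})$, whose extra edge $\mathsf{2}\to\mathsf{1}$ would have to expand to an edge from $\mathsf{6}$ into the copy of $P$ at $\mathsf{5}$, i.e.\ to $\mathsf{5}$ itself or to its Casimir $\mathsf{3}$; no such edge exists (vertex $\mathsf{6}$ points only to $\mathsf{0},\mathsf{1},\mathsf{4}$), so no matching contracts this micro\/-\/graph onto a sunflower. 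Supplying this one concrete verification is what turns your plan into the paper's proof.
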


\begin{proof}
The micro\/-\/graph expansion of the `sunflower' graph is not enough in $d=3$ because, in particular, the before\/-\/last micro\/-\/graph in~$X^{\gamma_3}_3$, namely
$(-8)\cdot [(\mathsf{4},\mathsf{0})$,$(\mathsf{4},\mathsf{1})$,$(\mathsf{4},\mathsf{5})$,$(\mathsf{5},\boldsymbol{\mathsf{2}})$,$(\mathsf{5},\mathsf{3})$,
$(\mathsf{5},\mathsf{6})$,$(\mathsf{6},\mathsf{0})$,$(\mathsf{6},\mathsf{1})$,$(\mathsf{6},\mathsf{4})]$ with trident vertices $\mathsf{4},\mathsf{5},\mathsf{6}$, sink~$\boldsymbol{\mathsf{2}}$, and terminal vertices $\mathsf{0},\mathsf{1},\mathsf{3}$, does not originate from either graph in the `sun\-flo\-wer'~$X^{\gamma_3}_2$. Indeed, the above micro\/-\/graph contains a $3$-\/cycle $\mathsf{4}\to\mathsf{5}\to\mathsf{6}\to\mathsf{4}$ but no edge from~$\mathsf{6}$ to either~$\mathsf{5}$ or its Casimir~$\mathsf{3}$  in a would\/-\/be expansion of~$P(\varrho,[a])$ with~$\mathsf{5}$ in the trident~top.\footnote{\label{FootVF3DlessThanSunflower3D}%
Not all of the micro\/-\/graphs appearing in the expansion of Kontsevich's graphs $X^{\gamma_3}_{d=2}$ mod~$\lshad P,H \rshad$ are needed for a solution~$X^{\gamma_3}_3$, see Example~\ref{ExSunflower3D} and the eleven micro\/-\/graphs on p.~\pageref{pElevenMicro3D}.}
\end{proof}

Let us examine how, by which mechanism(s), Eq.~\eqref{EqCoboundaryInPoly} is verified for the tetrahedral cocycle~$\gamma_3$ and the respective flow of Nambu brackets on~$\BBR^3$. 

\begin{lemma}\label{LemmaLeibnizMicroGraphExist}
If $d=3$ and $P(\varrho,[a])$ is Nambu, the Jacobiator tri\/-\/vector graph $\Jac(P)$ remains a nontrivial linear combination, $\Jac(P)([\varrho],[a])$, of $3$ or $6$ nonvanishing 
micro\/-\/graphs, each on $m=3$ sinks of in\/-\/degree~$1$, on two trident vertices with $\varrho \cdot \varepsilon^{i_1^\alpha i_2^\alpha i_3^\alpha}$, and on two terminal vertices of in\/-\/degrees~$1$ and~$2$ (if $\varrho \equiv \text{const}$) or $(1,1)$ and $(1,2)$ if~$\varrho \not\equiv \text{const}$. 
\end{lemma}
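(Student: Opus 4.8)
The plan is to realize $\Jac(P)=\tfrac{1}{2}\schouten{P,P}$ by its tripod Leibniz graph (footnote~\ref{FootExJacLeibnizGraph}), expand that into the standard linear combination of Kontsevich tri\/-\/vector graphs built over $m=3$ sinks from two wedges — one copy of the bi\/-\/vector in each wedge top — and then pass to micro\/-\/graphs. Since a Nambu bracket on~$\BBR^3$ is genuinely Poisson, $\Jac(P)([\varrho],[a])$ vanishes as a differential polynomial; the assertion is therefore about its \emph{micro\/-\/graph representative}, namely that this representative does not collapse at the graph level but stays a nonzero combination of individually nonvanishing micro\/-\/graphs, the analytic vanishing being a cancellation among them. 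Concretely, I would substitute the Nambu bi\/-\/vector $P(\varrho,[a])$, with $P^{jk}=\varrho\,\veps^{ijk}a_{x^i}$, into both wedge tops and invoke Remark~\ref{RemExpandMKgraphToMicro}: each copy of~$P$ resolves into a trident vertex carrying $\varrho\cdot\veps^{ijk}$ together with one terminal Casimir vertex carrying~$a$ that absorbs the edge labelled~$i$, while the two free legs $j,k$ are the bi\/-\/vector's outgoing edges; the single internal edge of the Jacobiator — the Schouten derivative of one~$P$ by the other — is then distributed by the Leibniz rule over the subgraph $\varrho\cdot\veps\cdot a$ of that copy, landing either on~$a$ (raising a Casimir in\/-\/degree to~$2$) or, only when $\varrho\not\equiv\const$, on~$\varrho$. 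This already fixes the shape claimed in the lemma: two trident vertices with $\varrho\cdot\veps$, two terminal Casimir vertices, and three sinks of in\/-\/degree~$1$.

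Next I would settle the edge budget and enumerate. The two tridents emit $2\times 3=6$ ordered edges; exactly three are spent on the three sinks, leaving three edges for the two Casimir vertices and, in the non\/-\/constant case, for the~$\varrho$ factors. Running over the admissible placements and discarding on sight every configuration with a sign\/-\/reversing automorphism — the \emph{zero} micro\/-\/graphs of Definition~\ref{DefMicroGraph} — leaves a short list with the stated in\/-\/degree profiles: terminal Casimir vertices of in\/-\/degrees $1$ and~$2$ when $\varrho\equiv\const$, and the profile $(1,1)$, $(1,2)$ once derivatives on~$\varrho$ are switched on.

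The crux is to certify that each surviving candidate is \emph{nonvanishing} in the sense of Definition~\ref{DefMicroGraph}, i.e.\ that the differential polynomial in the jets of~$\varrho$ and~$a$ obtained by expanding the two summations $\sum_{i,j,k=1}^{3}$ over the two Levi\/-\/Civita symbols is not identically zero. For this I would reduce the product $\veps^{ijk}\veps^{lmn}$ in~$d=3$ to the $3\times 3$ determinant of Kronecker symbols, turn each micro\/-\/graph into an explicit differential monomial, and exhibit for every topology one monomial surviving with a nonzero coefficient (equivalently, one choice of~$\varrho,a$ making the expression nonzero). Counting the survivors gives $3$ micro\/-\/graphs when $\varrho\equiv\const$ and $6$ when $\varrho\not\equiv\const$; as the weights inherited from the tripod expansion and the Leibniz distribution are nonzero, the combination $\Jac(P)([\varrho],[a])$ is nontrivial at the micro\/-\/graph level, as claimed.

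I expect this nonvanishing step to be the main obstacle. In a fixed dimension the antisymmetry of~$\veps$ forces many a priori admissible micro\/-\/graphs to reduce to the identically zero polynomial — the \emph{vanishing yet nonzero} micro\/-\/graphs flagged in Definition~\ref{DefMicroGraph} — so the enumeration of the previous step overcounts, and one cannot read off the numbers $3$ and~$6$ without actually performing the $\veps$-\/$\veps$ reductions. The delicate bookkeeping is to keep the two $\varrho$-\/cases cleanly apart, since allowing a derivative on~$\varrho$ simultaneously enlarges the expansions to be checked and alters the in\/-\/degree profile of the terminal vertices.
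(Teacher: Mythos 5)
The paper states this lemma without an explicit proof, so there is nothing to compare against line by line; your reconstruction --- expand the tripod into the three cyclic Kontsevich tri\/-\/vector graphs on two wedges, resolve each Nambu wedge into a trident vertex $\varrho\cdot\veps^{ijk}$ plus one terminal Casimir vertex, let the single internal edge act by the Leibniz rule on either $a$ (raising a Casimir in\/-\/degree to~$2$) or, when $\varrho\not\equiv\const$, on~$\varrho$, and then certify nonvanishing of each resulting micro\/-\/graph via the $\veps$--$\veps$ contraction in $d=3$ --- is precisely the direct verification the statement requires, and it correctly accounts for the edge budget, the counts $3$ versus $6$, and the stated in\/-\/degree profiles. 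Your proposal is correct and matches the (implicit, computational) argument the authors rely on.
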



\begin{proposition}\label{PropG3notOnlyLeibnizMicro3D}
The trivialization mechanism,
$Q_{\gamma_3}(P(\varrho,[a])) - \lshad P(\varrho,[a]), \vec{X}{}^{\gamma_3}_3 ([\varrho],[a]) \rshad \doteq 0$, for the tetrahedral\/-\/graph flow on the space of Nambu\/-\/determinant Poisson bi\/-\/vectors $P(\varrho,[a])$ over~$\BBR^3$ and for 
the 
linear combination of 
micro\/-\/graphs~$X^{\gamma_3}_3$, does not amount \emph{only to} the Leibniz (micro)\/graphs and zero (micro)\/graphs in the right\/-\/hand side of coboundary equation~\eqref{EqCoboundaryInPoly}.
\end{proposition}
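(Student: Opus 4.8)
The plan is to mirror the two\/-\/dimensional argument of Proposition~\ref{PropG3Triv2DnotLeibnizGraphs}(Part~2), now at the level of micro\/-\/graphs over~$\BBR^3$. By Theorem~\ref{ThG3trivial3D} the difference $Q_{\gamma_3}(P(\varrho,[a])) - \lshad P(\varrho,[a]), \vec{X}{}^{\gamma_3}_3\rshad$ vanishes as a differential polynomial in~$\varrho$ and~$a$; the task is to show that this vanishing is not realized through Leibniz micro\/-\/graphs and zero micro\/-\/graphs alone. My strategy is to exhibit one nonvanishing micro\/-\/graph whose topology occurs with nonzero coefficient in the difference but cannot be produced by the micro\/-\/graph expansion of any admissible Leibniz micro\/-\/graph in the right\/-\/hand side of~\eqref{EqCoboundaryInPoly}.

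First I would locate the distinguishing feature in a tadpole. The three graphs of $Q_{\gamma_3}$ in~\eqref{EqG3FlowMKgraphs} carry no $1$\/-\/cycles, and resolving each copy of $P(\varrho,[a])$ into its trident vertex $\varrho\cdot\varepsilon^{ijk}$ and terminal Casimir vertex (Remark~\ref{RemExpandMKgraphToMicro}) creates no self\/-\/loop, since every arrow incoming to a $P$\/-\/vertex issues from a \emph{different} vertex; hence the micro\/-\/graph expansion of $Q_{\gamma_3}(P)$ is tadpole\/-\/free. By contrast, $\vec{X}{}^{\gamma_3}_3$ carries a tadpole in its last micro\/-\/graph (Remark~\ref{RemTadpoleInXonR3must}), and, exactly as in footnote~\ref{FootTadpolesDisappear} for~$d=2$, that $1$\/-\/cycle survives in $\lshad P, \vec{X}{}^{\gamma_3}_3\rshad$. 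Consequently every tadpole micro\/-\/graph in the difference comes from $-\lshad P,\vec{X}{}^{\gamma_3}_3\rshad$, and these are the candidate markers, playing the role of graphs~A,~B,~C of~\eqref{EqABCgraphs}.

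Next I would enumerate the admissible Leibniz micro\/-\/graphs entering the right\/-\/hand side of~\eqref{EqCoboundaryInPoly} and compute their micro\/-\/graph expansions. Each such graph carries one Jacobiator spot of out\/-\/degree~$3$; by Lemma~\ref{LemmaLeibnizMicroGraphExist} this spot resolves, for Nambu $P(\varrho,[a])$ in $d=3$, into the nontrivial combination of $3$ or~$6$ nonvanishing micro\/-\/graphs on two trident and two terminal vertices with the in\/-\/degrees prescribed there, while the remaining two copies of~$P$ contribute the other two trident and two terminal vertices, so that the totals $4$~tridents and $4$~terminals match those of $Q_{\gamma_3}(P)$. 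From this finite list I would read off which tadpole topologies a Leibniz micro\/-\/graph can generate: the in\/-\/degree pattern at the $1$\/-\/cycle vertex, the admissible placement of the tadpole relative to the sinks, and the co\/-\/occurrence of double edges. Matching these against the tadpole micro\/-\/graphs produced in $\lshad P, \vec{X}{}^{\gamma_3}_3\rshad$ by the tadpole term of~$\vec{X}{}^{\gamma_3}_3$, I would isolate at least one micro\/-\/graph realized in the difference yet absent from every Leibniz micro\/-\/graph expansion, just as a vertex of in\/-\/degree four excluded graph~B and a double edge excluded graph~C in the planar case.

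The hard part will be the finiteness and exhaustiveness of this matching. Unlike the twelve Leibniz graphs of the $d=2$ proof, in $d=3$ each Jacobiator expands by Lemma~\ref{LemmaLeibnizMicroGraphExist} into several micro\/-\/graphs, and the surrounding $\varrho\cdot\varepsilon^{ijk}$ and Casimir vertices multiply both the number of admissible Leibniz micro\/-\/graphs and the number of terms in their expansions. One must also keep the bookkeeping honest with respect to \emph{vanishing} micro\/-\/graphs (Definition~\ref{DefMicroGraph}): the chosen marker has to be verified genuinely nonvanishing, its index\/-\/summed differential polynomial not identically zero, so that it cannot be silently absorbed into the zero\/- and vanishing\/-\/micro\/-\/graph slack. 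Once a single such surviving tadpole marker is pinned down, the conclusion follows as in Proposition~\ref{PropG3Triv2DnotLeibnizGraphs}: the coboundary difference is $\doteq 0$ as a differential polynomial over~$\BBR^3$, yet it is not a linear combination of Leibniz and zero micro\/-\/graphs, so a further dimension\/-\/specific (Levi\/-\/Civita) cancellation mechanism is necessarily at work.
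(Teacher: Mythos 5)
Your strategy is genuinely different from the paper's, and as it stands it has a gap. The paper does not redo the $d=2$ enumeration argument inside $\BBR^3$; it argues by contradiction via dimensional reduction. Supposing $Q_{\gamma_3} - \lshad P, X^{\gamma_3}_3\rshad = \Diamond$ with $\Diamond$ a combination of Leibniz and zero micro\/-\/graphs, one sets the Casimir $a \mathrel{{:}{=}} z$ and takes $\varrho = \varrho(x,y)$; the arrows into the terminal $a$-vertices then have fixed index $3$, each remaining micro\/-\/graph collapses to a Kontsevich graph over $d=2$, and zero micro\/-\/graphs stay zero. This would produce exactly the Leibniz\/-\/graph realization of the two\/-\/dimensional identity that Part~2 of Proposition~\ref{PropG3Triv2DnotLeibnizGraphs} already rules out --- a three\/-\/line contradiction that reuses the hard work done in $d=2$.

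Your plan, by contrast, is to exhibit a marker micro\/-\/graph in $d=3$ directly, and here the gap is twofold. First, the essential step --- the exhaustive enumeration of admissible Leibniz micro\/-\/graphs over $\BBR^3$ and the verification that some nonvanishing tadpole topology in $\lshad P, X^{\gamma_3}_3\rshad$ escapes all of their expansions --- is deferred, not performed; without it nothing is proved. Second, the guiding heuristic is shaky: a tadpole by itself is not a reliable obstruction, because Leibniz micro\/-\/graphs with one tadpole are perfectly admissible in the right\/-\/hand side of~\eqref{EqCoboundaryInPoly} (they were even listed, graphs 4--12, in the $d=2$ proof), and the authors expect the surviving tadpole terms of $\lshad P, X^{\gamma_3}_3\rshad$ to be balanced precisely by such Leibniz graphs with tadpoles. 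In the planar case the successful markers were finer invariants (a vertex of in\/-\/degree four, a specific double edge, the unwanted companions in the expansion of Leibniz graph~8), not the mere presence of a $1$-cycle; you would need an analogously sharp invariant in $d=3$, and you have not identified one. I would recommend replacing the direct search by the reduction $a \mathrel{{:}{=}} z$, which converts the claim into the already\/-\/established two\/-\/dimensional impossibility.
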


\begin{proof}
Suppose for contradiction that a linear combination~$\Diamond$ of Leibniz and zero micro\/-\/graphs turns the coboundary equation, $Q_{\gamma_3} - \lshad P,X^{\gamma_3} \rshad = \Diamond \bigl([\varrho],[a],\tfrac{1}{2}\lshad P,P\rshad \bigr) $, into an equality of bi\/-\/vector micro\/-\/graphs. By setting the Casimir $a \mathrel{{:}{=}} z$ we fix the values of indices decorating the arrows which run into the terminal vertices~$a$; now for~$\varrho(x,y)$ independent of~$z$, the rest of each micro\/-\/graph becomes a Kontsevich graph over~$d=2$. Zero micro\/-\/graphs from dimension~$3$ remain zero over dimension~$2$. 
This dimensional reduction would yield a Leibniz\/-\/graph realization of the corresponding r.-h.s.\ for the two\/-\/dimensional problem from Part~2 of Proposition~\ref{PropG3Triv2DnotLeibnizGraphs}. But this is impossible; therefore, at least one new mechanism of differential polynomials' vanishing is at work.
\end{proof}

\subsection*{Conclusion}
Kontsevich's symmetries $\smash{\dot{P}} = Q_\gamma(P)$ of the Jacobi identity $\tfrac{1}{2}\lshad P,P\rshad=0$ are produced from suitable graph cocycles~$\gamma$ as described in~\cite{Ascona96,OrMorphism2018}. We study the (non)\/triviality of these flows w.r.t.\ the Poisson differential~$\dd_P=\lshad P,{\cdot}\rshad$. The original formalism of~\cite{Ascona96} yields tensorial $GL(\infty)$-\/invariants; but in finite dimension~$d$ of Poisson manifolds, these tensors, encoded by Kontsevich's graphs, can become linearly dependent, whence the `incidental' trivializations (e.g., in $d=2$). The graph language can be adapted to the subclass of Nambu\/-\/determinant Poisson brackets $P(\varrho,[\ba])$;
Kontsevich's graph\/-\/cocycle flows do restrict to the Nambu subclass (see~\cite{skew21}). The new calculus of micro\/-\/graphs
is good for encoding known flows and 
vector fields. Still the core task of this research is \emph{finding} these fields $\smash{\vec{X}}\bigl([\varrho],[\ba]\bigr)$ or proving their non\/-\/existence over~$\smash{\BBR^d}$.

The calculus of micro\/-\/graphs is (almost) 
well\/-\/behaved\footnote{\label{FootKernelDimDown}%
The projection --\,from micro\/-\/graphs to differential\/-\/polynomial coefficients (in~$\varrho$ and~$a_\ell$) of multi\/-\/vectors\,-- has a kernel which contains,
as a strict subset, the space of Leibniz and zero (micro)\/graphs, but does not amount only to~it.
Can the dimension reduction result in an identically zero vector field $\smash{\vec{X}} {}^\gamma_{d-1} ([\varrho],[\ba])\equiv\smash{\vec{0}}$ on~$\BBR^{d-1}$\,? That is, can a tower of micro\/-\/graph solutions $X^\gamma_{d\geqslant d_0}$ start at bottom dimension~$d_0>2$ above the main case of~$\BBR^2$\,?%
}
under the dimensional reduction $d\mapsto d-1$ by the loss of one Casimir $a_{d-2} \mathrel{{:}{=}} x^d$ and last coordinate~$x^d$ in~$\varrho$ and all other Casimirs~$a_\ell$, $\ell<d-2$.
The forward move, $d\mapsto d+1$, is not well defined. \textit{A priori} there is no guarantee that any solution~$X^\gamma_{d+1}$ exists at all: the dimensions $d_0 < d_0+1$ can mark the threshold where the $GL(\infty)$-\/invariants from Kontsevich's graphs lose their linear dependence in lower dimensions $d\leqslant d_0$, and the flow $\smash{\dot{P}}\bigl([\varrho],[a_1],\ldots,[a_{d-1}]\bigr) = Q^\gamma_{d+1}\bigl(P(\varrho,[\ba]) \bigr)$ becomes Poisson\/-\/nontrivial is dimension $d_0+1$ and onwards.

\begin{lemma}\label{LemmaSunflowerBetweenRho}
Suppose there exists a trivializing vector field~$X^\gamma_{d+1}$ for a $\gamma$-\/flow of~$P(\varrho,[\ba])$ over~$\BBR^{d+1}$, and this solution projects 
--\,when $a_1\mathrel{{:}{=}} x^3$, $\ldots$, $a_{d-1}\mathrel{{:}{=}} x^{d+1}$\,--
onto the known linear combination~$X^{\gamma}_{d=2}$ of Kontsevich's graphs.
Then $X^\gamma_{d+1}$ contains (at least) those micro\/-\/graphs from the expansion of~$X^\gamma_{d=2}$ over~$d+1$ in which all the old edges between Poisson structures~$P(\varrho,[\ba])$ head to arrowtail vertices for~$\varrho$ but not to terminal vertices for the Casimirs~$a_\ell$.
\end{lemma}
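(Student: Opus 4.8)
The plan is to analyze the dimensional-reduction projection $\pi$ that sets $a_1 := x^3,\ldots,a_{d-1} := x^{d+1}$ and simultaneously restricts $\varrho$ to depend only on $(x^1,x^2)$ (the iterated reduction of Remark~\ref{RemReduceDimInNambu}), and to determine its kernel and its action on the micro-graph basis. The governing observation is that under $\pi$ each Casimir becomes affine, so $\partial_{x^{s_1}}\cdots\partial_{x^{s_k}}(a_\ell)\equiv 0$ whenever $k\geqslant 2$. In the language of Definition~\ref{DefMicroGraph} this says: every micro-graph carrying a Casimir terminal vertex of in-degree ${}\geqslant 2$ lies in $\ker\pi$. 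First I would establish the converse direction of survival, namely that a micro-graph over $\BBR^{d+1}$ projects to something nonzero only if \emph{every} Casimir vertex has in-degree exactly one.

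For such a surviving micro-graph the single edge into the $a_\ell$-vertex forces its decorating index to $\ell+2$ through $\partial_{x^i}(x^{\ell+2})=\delta^{i}_{\ell+2}$. Next I would feed this into each trident: the Levi-Civita symbol $\varepsilon^{i_1\cdots i_{d+1}}$ then has $d-1$ of its slots pinned to the distinct values $3,\ldots,d+1$, so by antisymmetry the two remaining free edges are forced to carry indices in $\{1,2\}$ and reproduce, up to one sign $\sigma_d=\varepsilon^{3,\ldots,d+1,1,2}$ per trident, the constant planar bivector weighted by $\varrho(x^1,x^2)$. Since any edge landing on a Casimir vertex must (as above) carry the index $\ell+2\geqslant 3$, every \emph{inter-trident} edge is necessarily one of the free edges and hence carries an index in $\{1,2\}$; the $\varrho$-derivatives it induces therefore survive, whereas an index ${}\geqslant 3$ would kill the graph because $\varrho=\varrho(x^1,x^2)$. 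Thus $\pi$ sends a surviving micro-graph to a genuine Kontsevich graph over $\BBR^2$, with the old inter-trident edges becoming derivatives on the $\varrho$-weights.

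Next I would expand each Kontsevich graph $G$ of $X^\gamma_{d=2}$ into micro-graphs over $\BBR^{d+1}$ by the Leibniz rule (Remark~\ref{RemExpandMKgraphToMicro}): each old incoming edge lands either on the $\varrho$-trident or on one of the Casimirs $a_\ell$. Applying $\pi$ and invoking the first paragraph, every summand in which some old edge reaches a Casimir develops a Casimir vertex of in-degree ${}\geqslant 2$ and dies, leaving as the unique survivor the micro-graph $\tilde{G}$ in which all old edges head to $\varrho$-tridents --- precisely the graphs named in the lemma. Collapsing the Kronecker deltas then gives $\pi(\tilde{G})=\sigma_d^{\,n}\,G$ with $n$ the number of trident vertices, a fixed nonzero constant uniform across $X^\gamma_{d=2}$ since all its graphs share one vertex count; hence $\pi\bigl(\sum_G c_G\,\tilde{G}\bigr)=\sigma_d^{\,n}\,X^\gamma_{d=2}$.

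Finally, since $\pi(X^\gamma_{d+1})=X^\gamma_{d=2}$ by hypothesis, it remains to see that the coefficient of each $\tilde{G}$ in any micro-graph expression projecting to $X^\gamma_{d=2}$ is forced to be $c_G/\sigma_d^{\,n}\neq 0$. The hard part will be proving that $\tilde{G}$ is, up to micro-graph isomorphism and sign, the \emph{unique} surviving preimage of $G$. A surviving micro-graph is in fact rigid: its free-edge topology must reproduce $G$; its inter-trident edges (the $\varrho$-derivatives) must match the incoming edges of $G$; and its Casimir edges are pinned to the distinct indices $3,\ldots,d+1$, one edge per Casimir type per trident. The only residual freedom is the matching of tridents to same-type Casimir vertices, but because those vertices are terminal with in-degree one, relabelling them is a micro-graph automorphism, so all such matchings give the same $\tilde{G}$ and feed the same reduced monomial. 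Once this rigidity is established --- together with the check that the attachment-relabelling is not a sign-reversing automorphism, so that $\tilde{G}$, living in the expansion of a nonzero Kontsevich graph, is itself nonzero --- injectivity of $\pi$ on the span of surviving micro-graphs (modulo isomorphism) forces every $\tilde{G}$ with $c_G\neq 0$ to occur in $X^\gamma_{d+1}$, which is the assertion of the lemma.
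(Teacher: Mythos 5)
Your proposal is correct and follows essentially the same route as the paper, whose entire justification for this lemma is the single sentence that ``it is the differential polynomials from these micro-graphs which, staying nonzero, retract to the bottom-most solution'' --- i.e.\ exactly your observation that under $a_\ell:=x^{\ell+2}$ every Casimir vertex of in-degree ${}\geqslant 2$ kills its micro-graph, while the micro-graphs whose old inter-$P$ edges all land on $\varrho$-vertices survive and reduce to the Kontsevich graphs of $X^\gamma_{d=2}$. Your final paragraph on the rigidity/uniqueness of the surviving preimage (needed to rule out other micro-graphs supplying the coefficient of $G$ after projection) is a point the paper does not address at all, so your write-up is, if anything, more complete than the source.
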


Indeed, it is the differential polynomials from these micro\/-\/graphs 
which, staying nonzero, retract to the bottom\/-\/most 
solution.

\begin{rem}\label{RemDiffOrdersInXFromFlow}
Another constraint --\,upon the derivative order profiles in~$X^i([\varrho],[\ba])$, hence in $\lshad P,\smash{\vec{X}} \rshad$\,-- comes from the bi\/-\/vector $\smash{Q_{\gamma_3}}(P(\varrho,[\ba]))$ with known differential\/-\/polynomial coefficients.
In effect, terms in~$\smash{X^i}$ can contain only those orders of derivatives which, under $\lshad P,\cdot\rshad$, reproduce the actually existing profiles of derivatives in~$Q_{\gamma_3}([\varrho],[\ba])$. 
\end{rem}

\begin{open}\label{OpenPrbG3isNontrivNextDim}
Is there a dimension $d+1 <\infty$ at which the tetrahedral\/-\/graph flow on the space of Nambu structures over~$\BBR^{d+1}$ becomes nontrivial in the second 
Poisson cohomology\,?
\end{open}



{\small
\subsubsection*{Acknowledgements}
The authors thank the organizers of~\textsc{Group34} colloquium on group theoretical methods in Physics on 18--22 July 2022 in Strasbourg for a very warm atmosphere during the meeting.
The authors thank M.\,Kontsevich and G.\,Kuperberg for helpful discussions;
the authors are grateful to the 
referee for useful comments and suggestions. 

A part of this research was done while A.K.\ was visiting at the $\smash{\text{IH\'ES}}$
; A.K.\ thanks the $\smash{\text{IH\'ES}}$ for hospitality, and thanks the $\smash{\text{IH\'ES}}$ and Nokia Fund for financial support. 
The travel of A.
K.\ was partially supported by project 135110 at the Bernoulli Institute, University of Groningen.
The research of R.B.\ was supported by project~$5020$ at the Institute of Mathematics,
Johannes Gutenberg\/--\/Uni\-ver\-si\-t\"at Mainz and by CRC-326 grant GAUS ``Geometry and Arithmetic of Uniformized Structures''.

}

\appendix\section{(Non)triviality of $\gamma_3$-flow for Nambu\/--\/Poisson brackets on~$\BBR^4$}
\label{SecTetraFlow4D}

\noindent%
From~\cite{skew21} we know
that Kontsevich's tetrahedral $\gamma_3$-flow restricts to the space of Nambu\/-\/determinant Poisson bi-\/vectors $P(\varrho,[a_1],[a_2])$ over~$\mathbb{R}^4$:
the differential\/-\/polynomial velocities~$\dot{\varrho}$ and $\dot{a}_1, \dot{a}_2$ inducing the 
graph cocycle evolution $\dot{P}([\varrho],[a_1],[a_2])$ are stored externally.\footnote{
\texttt{https://rburing.nl/gcaops/adot\_rhodot\_g3\_4D.txt}} 
The evolutions $\dot{a}_1,\dot{a}_2(\varrho,[a_1],[a_2])$ are realized by 
Kontsevich graphs $\Or(\gamma_3)(P\otimes P\otimes P$ $\otimes a_\ell)$, hence they are immediately expanded to micro\/-\/graph realizations.
The evolution $\dot{\varrho}([\varrho]$, $[a_1],[a_2])$ can then be expressed by using micro\/-\/graphs with minimal effort.

The problem of Poisson (non)\/triviality of the tetrahedral $\gamma_3$-flow for Nambu brackets in dimension $d=4$ is~open.
At the level of micro\/-\/graphs and Nambu\/-\/determinant Poisson structures $P(\varrho,[a_1],[a_2])$ over~$\mathbb{R}^4$,
a solution~$X^\gamma$ of~\eqref{EqCoboundaryInPoly}
for the graph cocycle~$\gamma_3$ would be realized by micro\/-\/graphs 
possibly with tadpoles, on one sink of in-degree $1$
, three vertices of out-degree~$4$, and two triples of terminal vertices for 
Casimirs $a_1,a_1,a_1$ and $a_2,a_2,a_2$.

\begin{proposition}\label{PropCountMicroG3R4}
$\bullet$\ There are $1,079$ isomorphism classes of directed graphs on one sink, 
three vertices of out-degree four, six terminal vertices, and at most one tadpole (of them, $352$~are without a tadpole and $727$ have one tadpole).\\
$\bullet$\ Taking those graphs containing a vertex of in-degree one (for the sink), and dynamically appointing the Casimirs from the multi\/-\/set $\{a_1,a_1,a_1,a_2,a_2,a_2\}$ to the six terminal vertices of the above graphs, we obtain $38,120$ micro\/-\/graphs.%
\\
$\bullet$\ Excluding repetitions in the above set of micro\/-\/graphs (e.g., if those micro\/-\/graphs are isomorphic), still not excluding micro\/-\/graphs which equal minus themselves under a symmetry (automorphism of micro\/-\/graph with outgoing edge ordering and known location of~$a_1$'s and~$a_2$'s) we obtain $19,957$ micro\/-\/graphs in the ansatz for~$X^\gamma$ that would encode the trivializing vector field solutions, if any, of the coboundary equation~$Q_{\gamma_3}\bigl( P(\varrho,[a_1],[a_2])\bigr) =
\lshad P, \smash{\vec{X}} {}^{\gamma_3}_{  
4} ([\varrho],[a_1],[a_2] )\rshad$.
\\
$\bullet$\ Of these $19,957$ micro\/-\/graphs, one tadpole is present in $13,653$ micro\/-\/graphs, and there are no tadpoles in $6,304$ micro\/-\/graphs.
\end{proposition}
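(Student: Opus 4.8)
The plan is to establish all four counts by one structured computer enumeration, organized to mirror the $d=3$ bookkeeping recorded in Section~\ref{ChNambu3D}. First I fix the combinatorial model dictated by Definition~\ref{DefMicroGraph} and by the preceding paragraph: a candidate micro\/-\/graph has ten vertices, namely three aerial ``Levi--Civita'' vertices, each of out\/-\/degree $d=4$ with an \emph{ordered} $4$-tuple of outgoing edges (these carry the indices of $\varrho\cdot\varepsilon^{i_1i_2i_3i_4}$), together with one sink and six terminal vertices, all of out\/-\/degree~$0$. The $12$ edges issued by the aerial vertices are constrained so that neither the sink nor any terminal emits an edge, every terminal has in\/-\/degree $\geqslant 1$, and at most one edge is a tadpole (necessarily on an aerial vertex). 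The symmetry group to quotient by is $S_3$ on the three aerial vertices together with $S_6$ on the as\/-\/yet\/-\/unlabeled terminals; throughout, each isomorphism is recorded with the permutation it induces on the ordered edge\/-\/set, whose parity is what later decides whether a micro\/-\/graph equals minus itself.

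For the first bullet I generate every edge\/-\/assignment meeting the degree constraints above and reduce it modulo this symmetry group to isomorphism classes, using a canonical\/-\/form routine of the type already deployed in~\cite{skew21}. Splitting the resulting classes according to whether they carry zero or one tadpole is expected to return $352+727=1079$. At this stage the sink's in\/-\/degree is left free, so classes with a sink of in\/-\/degree $0$ or $\geqslant 2$ are still present.

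For the second and third bullets I first keep only those classes whose sink has in\/-\/degree exactly~$1$, so that $\smash{\vec{X}}$ is a genuine $1$-vector (one edge into the sink, as in the edge count of Example~\ref{ExExpandG3Micro3D}). To each surviving shape I attach the Casimir multi\/-\/set $\{a_1,a_1,a_1,a_2,a_2,a_2\}$ on its six terminals in all admissible ways; listing every resulting decorated graph \emph{with} the multiplicities coming from the not\/-\/yet\/-\/quotiented terminal labelings produces $38{,}120$ micro\/-\/graphs. A second pass then canonicalizes each \emph{labeled} micro\/-\/graph and discards duplicate canonical forms, while \emph{retaining} those micro\/-\/graphs that admit a sign\/-\/reversing automorphism (the zero micro\/-\/graphs are deliberately kept in the ansatz, and no $d$-dependent ``vanishing'' test of Definition~\ref{DefMicroGraph} is applied); this is expected to collapse the list to $19{,}957$. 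Sorting the survivors by tadpole number finally yields $13{,}653+6{,}304=19{,}957$, which is the fourth bullet.

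The main obstacle is getting the isomorphism bookkeeping exactly right in the presence of three decorations at once: the ordered outgoing edges at each aerial vertex, the $a_1$-versus-$a_2$ labels on the terminals, and the designation of the sink. The automorphism action on the ordered edge\/-\/set must be computed so that genuine repetitions are merged while sign\/-\/reversing micro\/-\/graphs are \emph{not} dropped, and so that neither the ``zero'' nor the stronger, $d$-specific ``vanishing'' criterion of Definition~\ref{DefMicroGraph} is inadvertently invoked here---either would shrink the list below $19{,}957$. As independent validation I would check the internal arithmetic ($352+727=1079$ and $13{,}653+6{,}304=19{,}957$) and re\/-\/run the identical pipeline on the $d=3$ data, confirming that it reproduces the $366$ and $244$ counts reported in Section~\ref{ChNambu3D}; agreement on that known case certifies the $d=4$ output.
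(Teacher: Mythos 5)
Your proposal is, in kind, the same as the paper's: the statement is a computational count, and the paper's own ``proof'' is only a construction sketch that names the \textsf{nauty} pipeline (\texttt{geng 10 9:12 | directg -e12 | pickg -d0 -m7 -D4 -M3} for the tadpole\/-\/free graphs, the analogous call with one edge fewer plus a hand\/-\/added tadpole on the lone out\/-\/degree\/-\/$3$ vertex for the one\/-\/tadpole graphs, and the $20$ permutations of the multi\/-\/set $\{a_1,a_1,a_1,a_2,a_2,a_2\}$ for the Casimir appointment). Your pipeline differs in implementation (direct generation of edge assignments and canonicalization, rather than orienting undirected \texttt{geng} output), which is immaterial. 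There is, however, one concrete modelling discrepancy that would change your first count: in the paper the $1{,}079$ isomorphism classes are of graphs on ten vertices with \emph{seven interchangeable} out\/-\/degree\/-\/zero vertices --- the sink is appointed only afterwards, ``dynamically'', among the in\/-\/degree\/-\/one vertices (this is exactly what the second bullet's phrase ``taking those graphs containing a vertex of in\/-\/degree one (for the sink)'' means). You instead distinguish the sink from the outset and quotient only by $S_3\times S_6$; since each $S_3\times S_7$ class then splits into one class per orbit of out\/-\/degree\/-\/zero vertices under the automorphism group, your enumeration would return strictly more than $1{,}079$ classes at the first stage (and your imposition of in\/-\/degree $\geqslant 1$ on the terminals already at that stage is a further filter the paper does not apply there). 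Your downstream counts could still come out right after the deduplication pass, but as written the procedure for the first bullet does not compute the quantity the proposition asserts; you should quotient by $S_7$ on all out\/-\/degree\/-\/zero vertices first and only then appoint the sink. Your suggested sanity check against the $d=3$ figures ($366$ and $244$) is a good safeguard and would likely expose this.
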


{\small
\begin{proof}[Construction sketch]
The representatives of isomorphism classes of graphs without tadpoles are generated\footnote{\label{FootNautyRef}%
\by{B.\,D. McKay, A. Piperno} 
\textit{Practical graph isomorphism. II}
\jour{J.~Symb.\ Comput.} \vol{60} 94 
(2014) 
doi:10.1016/ j.jsc.2013.09.003 
(\textit{Preprint} \texttt{arXiv:1301.1493v1} [cs.DM])%
} 
by the \textsf{nauty} 
command-line call \texttt{geng 10 9:12 | directg -e12 | pickg -d0 -m7 -D4 -M3}.
Likewise, the graphs with one tadpole are generated by first producing graphs with one edge fewer, using \texttt{geng 10 0:12 | directg -e11 | pickg -d0 -m7 -D4}, and adding a tadpole to the lonely vertex of out-degree $3$ in each graph.
For the appointment of Casimirs to $6$ vertices in all \emph{different} ways, one uses an efficient algorithm to generate the $20$ permutations of the multi-set $\{a_1,a_1,a_1,a_2,a_2,a_2\}$.
\end{proof}

}

\begin{proposition}[R.\,Buring, PhD thesis (2022)]\label{PropG5G7trivial2DHam}
If $d=2$, the Poisson cocycles~$Q_\gamma(P)$ 
for 
graph cocycles $\gamma\in\{\gamma_3,\gamma_5,\gamma_7\}$ are Poisson\/-\/trivial: $Q_\gamma(P) = \lshad P, \smash{\vec{X}} {}^\gamma_2 (P) \rshad$.
Every such vector field $\smash{\vec{X}} {}^\gamma_2(P)$ is Hamiltonian w.r.t.\ the standard symplectic structure $\omega=\Id x\wedge\Id y$ on~$\BBR^2$ 
and Ha\-mil\-to\-ni\-an~$H^\gamma(P)$.
The 
differential polynomials~$H^\gamma(P)$ are encoded by sums of Kontsevich 
graphs.
\end{proposition}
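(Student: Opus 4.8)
The plan is to reduce both assertions---Poisson triviality and $\omega$-Hamiltonianness---to a single scalar equation for the density, and then to solve that equation with a sinkless-graph ansatz. In $d=2$ write $P=\varrho\,\partial_x\wedge\partial_y$, so that $Q_\gamma(P)=\dot\varrho^{\gamma}\cdot\partial_x\wedge\partial_y$ with $\dot\varrho^{\gamma}([\varrho])$ a differential polynomial, homogeneous of degree $\#\mathrm{Vert}(\gamma)$ in $\varrho$ (four, six, eight for $\gamma_3,\gamma_5,\gamma_7$). A direct computation of the Schouten bracket of a bivector with a vector field gives
\[
\lshad P,\vec{X}\rshad=\bigl(\varrho\operatorname{div}\vec{X}-\vec{X}(\varrho)\bigr)\,\partial_x\wedge\partial_y .
\]
If $\vec{X}=\lshad P_{0},H\rshad$ is Hamiltonian for the \emph{constant} structure $P_{0}=\partial_x\wedge\partial_y$ dual to $\omega=\Id x\wedge\Id y$, then $\operatorname{div}\lshad P_{0},H\rshad=0$ identically, so the coboundary equation collapses to the scalar relation
\[
\dot\varrho^{\gamma}=\{\varrho,H^{\gamma}\}_{\omega}.
\]
Thus it suffices, for each $\gamma\in\{\gamma_3,\gamma_5,\gamma_7\}$, to produce a differential polynomial $H^{\gamma}([\varrho])$ solving this one equation: such an $H^{\gamma}$ settles everything at once, since the field $\vec{X}^{\gamma}_2:=\lshad P_{0},H^{\gamma}\rshad$ is $\omega$-Hamiltonian by construction and satisfies $\lshad P,\vec{X}^{\gamma}_2\rshad=Q_\gamma(P)$.

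For the construction I would posit $H^{\gamma}$ as an unknown linear combination of all sinkless (i.e.\ $m=0$) Kontsevich graphs on $\#\mathrm{Vert}(\gamma)-1$ aerial vertices (a copy of $P$ in each) and $2\bigl(\#\mathrm{Vert}(\gamma)-1\bigr)$ edges; in $d=2$ each such closed graph evaluates to a differential polynomial in $\varrho$, so a solution of this shape is exactly the claim that $H^{\gamma}$ is encoded by sums of Kontsevich graphs. Substituting into $\dot\varrho^{\gamma}=\{\varrho,H^{\gamma}\}_{\omega}$ and expanding all index sums over $\{1,2\}$ turns the equation into a linear algebraic system for the graph coefficients, one equation per differential monomial in the jet of $\varrho$. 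For $\gamma_3$ the ansatz has three vertices and the system is small enough to solve by hand, recovering the sunflower field of Proposition~\ref{PropG3Triv2DnotLeibnizGraphs} (which one checks directly is divergence-free); for $\gamma_5$ and $\gamma_7$ I would generate the closed graphs with \textsf{nauty} and solve the system by computer, in the spirit of the enumeration in Proposition~\ref{PropCountMicroG3R4}. The derivative-order bookkeeping of Remark~\ref{RemDiffOrdersInXFromFlow} bounds the jet order of $\varrho$ that can occur, and hence keeps both the ansatz and the system finite.

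The main obstacle is \emph{solvability}: one must show that $\dot\varrho^{\gamma}$ genuinely lies in the image of $H\mapsto\{\varrho,H\}_{\omega}$ inside the ring of differential polynomials, and in the span of sinkless graphs. Abstractly, existence over rational functions is cheap: on the locus $\varrho\neq0$ the symplectic form $\omega_{P}=\varrho^{-1}\,\Id x\wedge\Id y$ identifies $Q_\gamma(P)$ with a top-degree, hence closed and (on $\BBR^2$) exact, $2$-form, so a primitive exists; but this primitive carries negative powers of $\varrho$ and is not \emph{a priori} a differential polynomial. The real content is therefore a polynomiality statement, whose structural underpinning is that $\{\varrho,H\}_{\omega}$ is always a total $\partial_x,\partial_y$-divergence, so that a necessary condition is the vanishing of the variational (Euler) derivative of $\dot\varrho^{\gamma}$; I would establish this from the antisymmetry of $P^{ij}$ in $d=2$, which already forces the divergence of the trivializing field to cancel at the level of Kontsevich graphs (the symmetric pair $\partial_i\partial_j$ contracted against antisymmetric $P^{ij}$, as in the sunflower case). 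Converting this necessary condition into solvability \emph{uniformly} in $\gamma$---rather than verifying it separately for $\gamma_3,\gamma_5,\gamma_7$---is the step I expect to be genuinely hard, and it is precisely the point at which the case-by-case computer computation behind the cited thesis takes over.
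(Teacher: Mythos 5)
Your reduction is the right one, and it is in fact sharper than anything the paper itself offers: the text only states the proposition (citing the thesis) and exhibits the Hamiltonians $H^{\gamma_3}$, $H^{\gamma_5}$ in Example~\ref{ExG3G5G7Ham2D}, so the ``proof'' on record is the explicit verification of $Q_\gamma(P)=\lshad P,\lshad P_0,H^\gamma\rshad\rshad$ for the printed formulas. Your observation that a field Hamiltonian for the \emph{constant} structure $P_0=\partial_x\wedge\partial_y$ is divergence-free, so that $\lshad P,\vec{X}\rshad=(\varrho\operatorname{div}\vec{X}-\vec{X}(\varrho))\,\partial_x\wedge\partial_y$ collapses to the single scalar equation $\dot\varrho^\gamma=\{\varrho,H^\gamma\}_\omega$, is correct and is the honest structural content behind the computation; your candid admission that solvability (and in particular polynomiality of a primitive, not merely exactness of a top-degree form on the locus $\varrho\neq0$) is settled only case by case matches reality.

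There is, however, one concrete error that would make your computation fail as written: the ansatz. You propose sinkless Kontsevich graphs on $\#\mathrm{Vert}(\gamma)-1$ aerial wedges with $2\bigl(\#\mathrm{Vert}(\gamma)-1\bigr)$ edges. Count derivatives: $Q_\gamma(P)$ for $\gamma$ on $n$ vertices has $2n$ edges of which $2$ reach the sinks, so $\dot\varrho^\gamma$ is degree $n$ in $\varrho$ with $2n-2$ derivatives; since $\{\varrho,H\}_\omega=\varrho_xH_y-\varrho_yH_x$ adds one $\varrho$ and two derivatives, $H^\gamma$ must have degree $n-1$ and total derivative order $2n-4$. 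A closed graph on $n-1$ wedges carries $2(n-1)=2n-2$ edges, hence $2n-2$ derivatives --- two too many; for $\gamma_3$ your ansatz produces only monomials of order $6$, whereas $H^{\gamma_3}=8u_y^2u_{xx}-16u_xu_yu_{xy}+8u_x^2u_{yy}$ has order $4$ (and $H^{\gamma_5}$ has order $8$, not $10$). The correct graph realization has $n-1$ aerial vertices and $2n-4$ edges, e.g.\ $n-2$ wedges over one extra $\varrho$-vertex of out-degree zero --- which is also what ``encoded by sums of Kontsevich graphs'' must mean here, since not all vertices can have out-degree two. With that correction the linear-algebra scheme goes through and reproduces the stated Hamiltonians.
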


The case of~$\gamma_3$ was known to Kontsevich~\cite{Ascona96}, and the respective Hamiltonian was found by 
Bouisaghouane (see \texttt{arXiv:1702.06044} [math.DG]). The 
cases of~$\gamma_5$ and chosen representative for the graph cocycle~$\gamma_7$ are new.

\begin{example}\label{ExG3G5G7Ham2D}
Let $P = u \dd x \wedge \dd y$ be the generic Poisson bi\/-\/vector on $\BBR^2$; we have
$H^{\gamma_3} = 8 u^2_y u_{xx} -{}$
$16 u_x u_y u_{xy} + 8 u^2_x u_{yy}$ 
and
$
H^{\gamma_5} = 6 u^2_y u_{xx} u^2_{xy} - 12 u_x u_y u^3_{xy} - 6 u^2_y u^2_{xx} u_{yy} + 12 u_x u_y u_{xx} u_{xy} u_{yy} + {}$
$
6 u^2_x u^2_{xy} u_{yy} - 6 u^2_x u_{xx} u^2_{yy}
- 2 u^3_y u_{xy} u_{xxx} + 2 u_x u^2_y u_{yy} u_{xxx} + 2 u^3_y u_{xx} u_{xxy} 
+ 2u_x u^2_y u_{xy} u_{xxy} - {}$
$
4 u^2_x u_y u_{yy} u_{xxy} - 4 u_x u^2_y u_{xx} u_{xyy}
+ 2 u^2_x u_y u_{xy} u_{xyy} + 2 u^3_x u_{yy} u_{xyy} + 2 u^2_x u_y u_{xx} u_{yyy}
- 2 u^3_x u_{xy} u_{yyy} - {}$
$
2 u^4_y u_{xxxx} + 8 u_x u^3_y u_{xxxy} 
- 12 u^2_x u^2_y u_{xxyy} + 8 u^3_x u_y u_{xyyy} - 2 u^4_x u_{yyyy}$.
%
\end{example}

\end{document}